\theoremstyle{definition} 
\newtheorem{Unity}{Unity}[section] 
\newtheorem*{Definition*}{Definition} 
\newtheorem{Definition}[Unity]{Definition} 
\theoremstyle{plain} 
\newtheorem*{Theorem*}{Theorem}
\newtheorem{Theorem}[Unity]{Theorem}
\newtheorem{Proposition}[Unity]{Proposition}
\newtheorem{Corollary}[Unity]{Corollary}
\newtheorem{Lemma}[Unity]{Lemma}
\theoremstyle{remark} 
\newtheorem*{Remark*}{Remark}
\numberwithin{Unity}{section}
\newcommand{\PP}{\mathbb{P}}
\newcommand{\E}{\mathscr{E}}
\newcommand{\Ec}{\mathcal{E}}
\newcommand{\F}{\mathscr{F}}
\newcommand{\G}{\mathscr{G}}
\newcommand{\Ls}{\mathscr{L}}
\newcommand{\M}{\mathfrak{M}}
\newcommand{\Ox}{\mathscr{O}}
\newcommand{\Ps}{\mathscr{P}}
\newcommand{\rk}{\mathrm{rk}}
\newcommand{\HNP}{\mathrm{HNP}}
\newcommand{\Pic}{\mathrm{Pic}}
\newcommand{\Frob}{\mathrm{Frob}}
\newcommand{\Quot}{\mathrm{Quot}}
\newcommand{\Supp}{\mathrm{Supp}}
\newcommand{\Omg}{\mathrm{\Omega}}
\newcommand{\ConPgn}{\mathfrak{ConPgn}}
\begin{document}

\title{Frobenius Stratification of Moduli Spaces of Vector Bundles in Positive characteristic. II}
\author{Lingguang Li}
\address{School of Mathematical Sciences, Tongji University, Shanghai, P. R. China}
\email{LiLg@tongji.edu.cn}
\date{March 11, 2018}
\subjclass[2010]{Algebraic Geometry, 14G17, 14H60, 14D20}
\keywords{Moduli spaces, Vector bundles, Frobenius morphism, Stratification.}
\thanks{Partially supported by National Natural Science Foundation of China (Grant No. 11501418) and Shanghai Sailing Program.}

\begin{abstract} Let $X$ be a smooth projective curve of genus $g(X)\geq 1$ over an algebraically closed field $k$ of characteristic $p>0$, $\M^s_X(r,d)$ the moduli space of stable vector bundles of rank $r$ and degree $d$ on $X$. We study the Frobenius stratification of $\M^s_X(r,d)$ in terms of Harder-Narasimhan polygons of Frobenius pull backs of stable vector bundles and obtain the irreducibility and dimension of each non-empty Frobenius stratum in case $(p,g,r)=(3,2,3)$.
\end{abstract}
\maketitle
$$To~The~Memory~of~Professor~Michel~Raynaud$$

\section{Introduction}

Let $k$ be an algebraically closed field of characteristic $p>0$, $X$ a smooth projective curve of genus $g$ over $k$. The absolute Frobenius morphism $F_X:X\rightarrow X$ is induced by $\Ox_X\rightarrow \Ox_X$, $f\mapsto f^p$. Let $\M^s_X(r,d)$ be the moduli space of stable vector bundles of rank $r$ and degree $d$ on $X$.

For any vector bundle $\E$ on $X$, by the Harder-Narasimhan filtration of $\E$, we can define the \emph{Harder-Narasimhan Polygon} $\HNP(\E)$, which is a convex polygon in the coordinate plane of $\mathrm{rank}$-$\mathrm{degree}$ (cf. \cite[Section 3]{Shatz77}).

Fix integers $m$ and $n$ with $m>0$. Let $\ConPgn(m,n)$ be the set consists of all convex polygons in the coordinate plane which start at original point end at point $(m,n)$ and their vertexes are integral lattice points. Then there is a natural partial order structure, denoted by $\succcurlyeq$, on the set $\ConPgn(m,n)$.

In general, the semistability of vector bundles does not preserved under Frobenius pull back $F^*_X$ (cf. \cite{Gieseker73}, \cite{Raynaud82}). Thus, there is a natural set-theoretic map
\begin{eqnarray*}
S^s_{\Frob}:\M^s_X(r,d)&\rightarrow&\ConPgn(r,pd)\\
{[\E]}&\mapsto&\HNP(F^*_X(\E))
\end{eqnarray*}
For any $\Ps\in\ConPgn(r,pd)$, denotes
\begin{eqnarray*}
S_X(r,d,\Ps)&:=&\{[\E]\in\M^s_X(r,d)~|~\HNP(F^*_X(\E))=\Ps\}.\\
S_X(r,d,\Ps^+)&:=&\{[\E]\in\M^s_X(r,d)~|~\HNP(F^*_X(\E))\succcurlyeq\Ps\}.
\end{eqnarray*}
Then we have a canonical stratification of $\M^s_X(r,d)$ in terms of Harder-Narasimhan polygons of Frobenius pull backs of stable vector bundles, we call this the \emph{Frobenius stratification}.
By \cite[Theorem 3]{Shatz77}, S. S. Shatz showed that the Frobenius stratum $S_X(r,d,\Ps^+)$ is a closed subvariety of $\M^s_X(r,d)$ for any $\Ps\in\ConPgn(r,pd)$.

Our main themes is to study the geometric properties of Frobenius strata, such as non-emptiness, irreducibility, connectedness, smoothness, dimension and so on. However, very little is known about these Frobenius strata. Some results are only known in special cases for small values of $p$, $g$, $r$ and $d$. Joshi-Ramanan-Xia-Yu \cite{JRXY06} have given a complete description of moduli space $\M^s_{X}(2,d)$ when $p=2$ and $g\geq2$. In \cite{Li18} the author provided a complete classification of Frobenius strata in $\M^s_{X}(3,0)$ when $p=3$ and $g=2$. In the higher rank case, the author have obtained the geometric properties of a special Frobenius stratum in $\M^s_{X}(r,d)$ when $p|r$ and $g\geq2$ in \cite{Li14} and \cite{Li18}. Anther results about Frobenius stratification can be found in \cite{D09}\cite{JX00}\cite{LanP08}\cite{LasP02}\cite{LasP04}\cite{Oss06}\cite{Zh17} for $r=2$ and \cite{Li14}\cite{LiuZhou13}\cite{MehtaPauly07}\cite{Zh17} for $r>2$.

In this paper, we mainly study the Frobenius stratification of $\M^s_X(3,d)$ for any degree $d$, where $X$ is a smooth projective curve of genus $2$ over an algebraically closed field $k$ of characteristic $3$. In this case, there are $4$ possible Harder-Narasimhan polygons $\{\Ps_1(d),\Ps_2(d),\Ps_3(d),\Ps_4(d)\}$ for Frobenius destabilised stable vector bundles of rank $3$ and degree $d$ (See Section 2). The main result of this paper is the following Theorem \ref{Thm:FrobStra}.

\begin{Theorem}$($Theorem \ref{Thm:FrobStra}$)$
Let $k$ be an algebraically closed field of characteristic $3$, and $X$ a smooth projective curve of genus $2$ over $k$. Then $\overline{S_X(3,d,\Ps_i(d))}=S_X(3,d,\Ps^+_i(d))$, and $S_X(3,d,\Ps_i(d))$ $($resp. $S_X(3,d,\Ps^+_i(d))$$)$ are irreducible quasi-projective $($resp. projective$)$ varieties for $1\leq i\leq 4$, $$\dim S_X(3,d,\Ps_i(d))=\dim S_X(3,d,\Ps^+_i(d))=
\begin{cases}
5, i=1\\
5, i=2\\
4, i=3\\
2, i=4\\
\end{cases}$$
\end{Theorem}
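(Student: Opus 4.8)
The plan is to reconstruct each Frobenius destabilised bundle from the Harder--Narasimhan data of its Frobenius pull back, and thereby present each stratum as the image of an explicit, manifestly irreducible parameter space. The engine is the canonical connection $\nabla$ on $V:=F^*_X(\E)$ (with vanishing $p$-curvature), through which Cartier descent recovers $\E$ as the horizontal subsheaf $\ker(\nabla)$; stability of $\E$ forces the Harder--Narasimhan filtration $0\subset V_1\subset\cdots\subset V_l=V$ to be non-horizontal, so the second fundamental forms are nonzero $\Ox_X$-linear maps twisted by $\Omega^1_X$ (of degree $2g-2=2$), which is exactly the input that cuts the list of destabilised polygons down to $\{\Ps_1(d),\dots,\Ps_4(d)\}$. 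I would then exploit the adjunction $\mathrm{Hom}(F^*_X\E,Q)\cong\mathrm{Hom}(\E,F_{X*}Q)$ applied to the minimal destabilising quotient $Q=V/V_{l-1}$: the quotient map is sent to a nonzero morphism $\E\to F_{X*}Q$, and when $Q$ is a line bundle $L$ this realises $\E$ as a subsheaf (automatically locally free, since $X$ is a smooth curve) of the stable rank-$3$ bundle $F_{X*}L$, where $\deg F_{X*}L=\deg L+2$.

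For the deepest polygon $\Ps_4(d)$ the colength is $0$, i.e. $\E\cong F_{X*}L$ with $\deg L=d-2$; the morphism $L\mapsto F_{X*}L$ is injective (the canonical filtration of $F^*_XF_{X*}L$ recovers $L$ as its bottom quotient), so this stratum is a copy of $\Pic^{d-2}(X)$, hence irreducible and projective of dimension $g=2$. For $\Ps_1(d)$ the bundle is a colength-$1$ subsheaf of $F_{X*}L$ with $\deg L=d-1$; such subsheaves are parametrised by the length-$1$ Quot scheme $\Quot^1(F_{X*}L)\cong\PP(F_{X*}L)$, a $\PP^2$-bundle over $X$ of dimension $3$, fibred over $\Pic^{d-1}(X)$, giving dimension $3+2=5$. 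The polygon $\Ps_2(d)$ is the $\E\mapsto\E^\vee$ dual of $\Ps_1(d)$ (dualising exchanges the rank-$1$ destabilising sub and quotient and reflects the polygon), so it is likewise irreducible of dimension $5$. The intermediate polygon $\Ps_3(d)$, where the connection degenerates the full flag only partially, I would present by the analogous but more constrained Quot/extension datum and expect dimension $4$.

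For each stratum I would check that the resulting classifying morphism from the parameter space into $\M^s_X(3,d)$ lands in the stratum, is dominant onto it, and is generically finite, by recovering the descent datum ($L$ together with the length-$c$ quotient, or its dual) intrinsically from $\HNP(F^*_X\E)$ and $\nabla$; irreducibility of the stratum then follows from irreducibility of $\Pic^\bullet(X)$ and of the punctual Quot schemes of a fixed bundle on a smooth curve. The dimension counts I would confirm independently by deformation theory, computing the relevant $\mathrm{Hom}$ and $\mathrm{Ext}^1$ groups governing deformations of $\E$ that preserve the Harder--Narasimhan type of $F^*_X\E$ (equivalently, of the pair $\E\hookrightarrow F_{X*}L$), and checking that these tangent space dimensions agree with $5,5,4,2$.

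Finally, since $S_X(3,d,\Ps^+_i(d))$ is closed (Shatz) and contains $S_X(3,d,\Ps_i(d))$, the inclusion $\overline{S_X(3,d,\Ps_i(d))}\subseteq S_X(3,d,\Ps^+_i(d))$ is automatic; for the reverse I would determine the Hasse diagram of $\{\Ps_1,\Ps_2,\Ps_3,\Ps_4\}$ under $\succcurlyeq$ (expecting $\Ps_4\succcurlyeq\Ps_3\succcurlyeq\Ps_1,\Ps_2$ with $\Ps_1,\Ps_2$ incomparable) and exhibit explicit one-parameter degenerations whose special fibres jump to each deeper polygon, placing every deeper stratum inside the closure of each shallower one it dominates; the matching dimensions $2<4<5$ make this consistent. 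The main obstacle I anticipate is threefold and concentrated away from $\Ps_4$: pinning down the precise Quot/extension description and exact dimension of the intermediate stratum $\Ps_3(d)$, proving generic finiteness of the parametrisations (that the descent datum is genuinely recoverable and that no positive-dimensional family of data yields the same $\E$), and constructing the degenerations needed for the closure equalities. The first of these, controlling the vanishing loci of the second fundamental forms that separate $\Ps_3$ from $\Ps_4$ and from $\Ps_1,\Ps_2$, is where I expect the real difficulty to lie.
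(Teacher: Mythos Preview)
Your strategy is essentially the paper's: embed $\E$ into ${F_X}_*L$ via adjunction from the minimal destabilising quotient, parametrise by Quot schemes over $\Pic(X)$, handle the polygon without a line-bundle quotient by duality $\E\mapsto\E^\vee$, and identify the deepest stratum with the Jacobian via $\E\cong{F_X}_*L'$. Two remarks on execution.

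First, you have the labels of $\Ps_1(d)$ and $\Ps_2(d)$ swapped relative to the paper: in the paper's convention it is $\Ps_2(d)$ (vertex at $(2,2d+1)$) whose minimal quotient is the line bundle of degree $d-1$, so the colength-$1$ Quot scheme over $\Pic^{d-1}(X)$ has generic polygon $\Ps_2(d)$, and $\Ps_1(d)$ is reached by duality. This is harmless for the final statement since both dimensions are $5$.

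Second, the paper replaces your proposed deformation-theoretic dimension count and your explicit one-parameter degenerations by a single local computation (its Proposition~4.5). Working in $\widehat{\Ox}_{X,x}\cong k[[t^3]]$, a colength-$1$ submodule $V_\E\subset k[[t]]$ determines $\HNP(F^*_X\E)$ via $\deg(F^*_X(\E)\cap E_2)$, and this is read off from whether $t,t^2\in V_\E$: the loci $\{t^2\in V_\E\}$ and $\{t,t^2\in V_\E\}$ cut out a $\PP^1_k$ and a point inside the fibre $\PP^2_k$ of the Quot scheme. This shows directly that each $\Quot_X(3,d,\Pic^{(d-1)}(X),\Ps^+_i(d))$ is a smooth irreducible $\PP^{5-i}$-bundle over $X\times\Pic^{d-1}(X)$, so irreducibility of $S_X(3,d,\Ps^+_i(d))$ and the closure equality $\overline{S_X(3,d,\Ps_i(d))}=S_X(3,d,\Ps^+_i(d))$ follow immediately from Shatz's openness without constructing any degenerations. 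For the dimension of the image the paper proves that $\theta$ restricted to $\Quot^\sharp$ (where the adjoint map $F^*_X\E\to L$ is surjective) is actually \emph{injective}, not merely generically finite: the line bundle $L$ is recovered as the bottom HN quotient of $F^*_X\E$, and the embedding $\E\hookrightarrow{F_X}_*L$ is then determined. Your anticipated obstacle---separating $\Ps_3$ from its neighbours---is thus resolved by this concrete local criterion rather than by second fundamental forms.
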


\section{Classification of Frobenius Harder-Narasimhan Polygons}

In this section, we will determine all of the possible Harder-Narasimhan polygons of $F^*_X(\E)$ for any Frobenius destabilized stable vector bundles $[\E]\in\M^s_X(3,d)$, where $X$ is a smooth projective curve of genus $2$ over an algebraically closed field $k$ of characteristic $3$.

\begin{Lemma}[N. I. Shepherd-Barron \cite{Shepherd-Barron98} and V. Mehta, C. Pauly \cite{MehtaPauly07}]\label{Lemma:InsHN}
Let k be an algebraically closed field of characteristic $p>0$, $X$ a smooth projective curve of genus $g\geq 2$ over $k$, $\E$ a semistable vector bundle on $X$. Let $0=\E_0\subset\E_1\subset\cdots\subset\E_{m-1}\subset\E_m=F^*_X(\E)$ be the Harder-Narasimhan filtration of $F^*_X(\E)$. Then $\mu(\E_i/\E_{i-1})-\mu(\E_{i+1}/\E_i)\leq 2g-2$, for any $1\leq i\leq m-1$.
\end{Lemma}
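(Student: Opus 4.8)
The plan is to realise $F^*_X(\E)$ as a bundle with its canonical connection and to exploit Cartier descent together with the semistability of $\E$. Write $V=F^*_X(\E)$ and equip it with the canonical connection $\nabla\colon V\to V\otimes\Omega^1_X$ determined by $\nabla(f\otimes s)=s\otimes df$; its $p$-curvature vanishes, so by Cartier's theorem the $\nabla$-horizontal subbundles of $V$ are precisely the bundles of the form $F^*_X(W_0)$ for subbundles $W_0\subseteq\E$, and such a subbundle satisfies $\mu(F^*_X(W_0))=p\,\mu(W_0)$. If $V$ is semistable the filtration is trivial and there is nothing to prove, so I assume $m\geq 2$ and write $\mu_i=\mu(\E_i/\E_{i-1})$, so that $\mu_1>\cdots>\mu_m$.

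The first step is to show that no proper step $\E_i$ with $1\leq i\leq m-1$ is $\nabla$-horizontal. Since the graded slopes are strictly decreasing, the average slopes satisfy $\mu(\E_1)>\mu(\E_2)>\cdots>\mu(\E_m)=\mu(V)$, hence $\mu(\E_i)>\mu(V)=p\,\mu(\E)$ for $i<m$. Were $\E_i=F^*_X(W_0)$ horizontal, semistability of $\E$ would force $\mu(\E_i)=p\,\mu(W_0)\leq p\,\mu(\E)$, a contradiction. Consequently $\nabla(\E_i)\not\subseteq\E_i\otimes\Omega^1_X$, and the associated second fundamental form $\beta_i\colon\E_i\to(V/\E_i)\otimes\Omega^1_X$, obtained by composing $\nabla|_{\E_i}$ with the projection $V\otimes\Omega^1_X\twoheadrightarrow(V/\E_i)\otimes\Omega^1_X$, is a nonzero $\Ox_X$-linear map; it is $\Ox_X$-linear because the Leibniz term $s\otimes df$ lands in $\E_i\otimes\Omega^1_X$ and so dies in the quotient.

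The final step is a slope comparison. The bundle $\E_i$ has Harder-Narasimhan graded slopes $\mu_1>\cdots>\mu_i$, so $\mu_{\min}(\E_i)=\mu_i$; the quotient $V/\E_i$ has graded slopes $\mu_{i+1}>\cdots>\mu_m$, so $\mu_{\max}\big((V/\E_i)\otimes\Omega^1_X\big)=\mu_{i+1}+\deg\Omega^1_X=\mu_{i+1}+(2g-2)$. Since $\mathrm{Hom}(A,B)=0$ whenever $\mu_{\min}(A)>\mu_{\max}(B)$, the non-vanishing of $\beta_i$ yields $\mu_i=\mu_{\min}(\E_i)\leq\mu_{\max}\big((V/\E_i)\otimes\Omega^1_X\big)=\mu_{i+1}+(2g-2)$, which is exactly the asserted bound $\mu(\E_i/\E_{i-1})-\mu(\E_{i+1}/\E_i)\leq 2g-2$.

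The part requiring the most care is the non-horizontality of the intermediate steps: it is here that both Cartier descent, used to translate ``$\nabla$-stable'' into ``descends, with slope multiplied by $p$'', and the semistability of $\E$ enter, via the elementary but essential observation that each partial average slope $\mu(\E_i)$ with $i<m$ strictly exceeds $\mu(V)$. Once the second fundamental form is known to be nonzero, the remaining slope bookkeeping is routine.
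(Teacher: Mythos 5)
Your proof is correct: the non-horizontality of each $\E_i$ via Cartier descent and semistability of $\E$, the $\Ox_X$-linearity of the second fundamental form, and the final slope comparison $\mu_{\min}(\E_i)\leq\mu_{\max}\bigl((V/\E_i)\otimes\Omega^1_X\bigr)$ are all sound. The paper itself states this lemma without proof, citing Shepherd-Barron and Mehta--Pauly, and your argument is essentially the standard one given in those references (and in X.\ Sun's work), so there is nothing to flag beyond noting that genus $g\geq 2$ is never actually needed in your argument.
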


According to the Lemma \ref{Lemma:InsHN}, there are $4$ possible Harder-Narasimhan polygons $$\{\Ps_1(d),\Ps_2(d),\Ps_3(d),\Ps_4(d)\}$$ for Frobenius destabilised stable vector bundles of rank $3$ and degree $d$ in the case $(p,g)=(3,2)$.

\begin{center}
\begin{tabular}{cccc}
\begin{picture}(80,110)
\linethickness{0.5pt}
\put(0,0){\vector(1,0){80}} 
\put(0,0){\vector(0,1){105}} 
\multiput(0,0)(20,0){4}{\line(0,1){2}}
\put(18,-8){1}
\put(38,-8){2}
\put(58,-8){3}
\put(-20,37){$d$+1}
\put(-16,87){3$d$}
\put(65,5){\tiny{rank}}
\put(2,100){\tiny{deg}}
\thinlines

\multiput(0,90)(4,0){15}{\line(1,0){2}} 
\multiput(0,40)(4,0){5}{\line(1,0){2}} 
\multiput(20,0)(0,4){10}{\line(0,1){2}} 
\multiput(60,0)(0,4){23}{\line(0,1){2}} 
\put(0,0){\line(1,2){20}} 
\put(20,40){\line(4,5){40}}
\put(50,92){\tiny{$\Ps_1(d)$}}
\end{picture}
&
\begin{picture}(80,110)
\linethickness{0.5pt}
\put(0,0){\vector(1,0){80}} 
\put(0,0){\vector(0,1){105}} 
\multiput(0,0)(20,0){4}{\line(0,1){2}}
\put(18,-8){1}
\put(38,-8){2}
\put(58,-8){3}
\put(-24,67){2$d$+1}
\put(-16,87){3$d$}
\put(65,5){\tiny{rank}}
\put(2,100){\tiny{deg}}
\thinlines

\multiput(0,90)(4,0){15}{\line(1,0){2}} 
\multiput(0,70)(4,0){10}{\line(1,0){2}} 
\multiput(40,0)(0,4){18}{\line(0,1){2}} 
\multiput(60,0)(0,4){23}{\line(0,1){2}} 
\put(40,70){\line(1,1){20}}
\put(50,92){\tiny{$\Ps_2(d)$}}
\begin{tikzpicture}
\draw (0,0)--(1.4,2.45);
\end{tikzpicture}
\end{picture}
&
\begin{picture}(80,110)
\linethickness{0.5pt}
\put(0,0){\vector(1,0){80}} 
\put(0,0){\vector(0,1){105}} 
\multiput(0,0)(20,0){4}{\line(0,1){2}}
\put(18,-8){1}
\put(38,-8){2}
\put(58,-8){3}
\put(-20,37){$d$+1}
\put(-24,67){2$d$+1}
\put(-16,87){3$d$}
\put(65,5){\tiny{rank}}
\put(2,100){\tiny{deg}}
\thinlines

\multiput(0,90)(4,0){15}{\line(1,0){2}} 
\multiput(0,70)(4,0){10}{\line(1,0){2}} 
\multiput(0,40)(4,0){5}{\line(1,0){2}} 
\multiput(20,0)(0,4){10}{\line(0,1){2}} 
\multiput(40,0)(0,4){18}{\line(0,1){2}} 
\multiput(60,0)(0,4){23}{\line(0,1){2}} 
\put(0,0){\line(1,2){20}} 
\put(20,40){\line(2,3){20}}
\put(40,70){\line(1,1){20}}
\put(50,92){\tiny{$\Ps_3(d)$}}
\end{picture}
&
\begin{picture}(80,110)
\linethickness{0.5pt}
\put(0,0){\vector(1,0){80}} 
\put(0,0){\vector(0,1){105}} 
\multiput(0,0)(20,0){4}{\line(0,1){2}}
\put(18,-8){1}
\put(38,-8){2}
\put(58,-8){3}
\multiput(0,0)(0,10){10}{\line(1,0){2}}
\put(-20,47){$d$+2}
\put(-24,77){2$d$+2}
\put(-16,87){3$d$}
\put(65,5){\tiny{rank}}
\put(2,100){\tiny{deg}}
\thinlines

\multiput(0,90)(4,0){15}{\line(1,0){2}} 
\multiput(0,80)(4,0){10}{\line(1,0){2}} 
\multiput(0,50)(4,0){5}{\line(1,0){2}} 
\multiput(20,0)(0,4){13}{\line(0,1){2}} 
\multiput(40,0)(0,4){20}{\line(0,1){2}} 
\multiput(60,0)(0,4){23}{\line(0,1){2}} 
\put(0,0){\line(2,5){20}} 
\put(20,50){\line(2,3){20}}
\put(40,80){\line(2,1){20}}
\put(50,92){\tiny{$\Ps_4(d)$}}
\end{picture}
\end{tabular}
\end{center}

\section{Construction of Stable Vector Bundles}

\begin{Definition}(\cite{JRXY06}\cite{Sun08i})\label{CanFil}
Let $k$ be an algebraically closed field of characteristic $p>0$, $X$ a smooth projective curve over $k$. For
any coherent sheaf $\F$ on $X$, let
$$\nabla_{\mathrm{can}}:F^*_X{F_X}_*(\F)\rightarrow
F^*_X{F_X}_*(\F)\otimes_{\Ox_X}\Omg^1_X$$
be the canonical connection on $F^*_X{F_X}_*(\F)$.
Set
\begin{eqnarray*}
V_1&:=&\ker(F^*_X{F_X}_*(\F)\twoheadrightarrow\F),\\
V_{l+1}&:=&\ker\{V_l\stackrel{\nabla_{\mathrm{can}}}{\rightarrow}F^*_X{F_X}_*(\F)
\otimes_{\Ox_X}\Omg^1_X\rightarrow (F^*_X{F_X}_*(\F)/V_l)
\otimes_{\Ox_X}\Omg^1_X\}
\end{eqnarray*}
The filtration ${\mathbb{F}^{\mathrm{can}}_\F}_\bullet:F^*_X{F_X}_*(\F)=V_0\supset V_1\supset V_2\supset\cdots$
is called the \emph{canonical filtration} of $F^*_X{F_X}_*(\F)$.
\end{Definition}

\begin{Lemma}[X. Sun \cite{Sun08i}]\label{Lem:Sun}
Let k be an algebraically closed field of characteristic $p>0$, $X$ a smooth projective curve of genus $g\geq 2$ over $k$, and $\E$ a vector bundle on $X$. Then the canonical filtration of $F^*_X{F_X}_*(\E)$ is $$0=V_p\subset V_{p-1}\subset\cdots\subset V_{l+1}\subset V_l\subset\cdots\subset V_1\subset V_0=F^*_X{F_X}_*(\E)$$
such that
\begin{itemize}
\item[$(1)$] $\nabla_{\mathrm{can}}(V_{i+1})\subset V_i\otimes_{\Ox_X}\Omg^1_X$ for $0\leq i\leq p-1$.
\item[$(2)$] $V_l/V_{l+1}\stackrel{\nabla_{\mathrm{can}}}{\rightarrow}\E\otimes_{\Ox_X}\Omg^{\otimes l}_X$ is isomorphic for $0\leq l\leq p-1$.
\item[$(3)$] If $g\geq 2$ and $\E$ is semistable, then the canonical filtration of $F^*_X{F_X}_*(\E)$ is nothing but the Harder-Narasimhan filtration of $F^*_X{F_X}_*(\E)$.
\item[$(4)$] If $g\geq 1$, then ${F_X}_*(\E)$ is semistable whenever $\E$ is semistable. If $g\geq 2$, then ${F_X}_*(\E)$ is stable whenever $\E$ is stable.
\end{itemize}
\end{Lemma}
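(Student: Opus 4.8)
Since this is Sun's structure theorem, the plan is to separate the purely formal claims (1)--(2), which I would get from the definition together with a local computation, from the stability claims (3)--(4), which rest on semistability of $\E$ and the sign of $2g-2$. Claim (1) is almost immediate: for $1\leq i\leq p-1$ the inclusion $\nabla_{\mathrm{can}}(V_{i+1})\subseteq V_i\otimes_{\Ox_X}\Omg^1_X$ is built into the recursive definition of $V_{i+1}$ in Definition \ref{CanFil}, and for $i=0$ it is vacuous. The content is (2). I would prove it locally: choose a formal coordinate $t$ with $dt$ trivializing $\Omg^1_X$; since $\E$ is locally free it suffices to treat $\E=\Ox_X$. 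Then ${F_X}_*\Ox_X$ is free over the target with frame $1,t,\dots,t^{p-1}$, so $F^*_X{F_X}_*\Ox_X$ is free with frame $e_i=t^i\otimes 1$, the canonical connection kills this frame, and the counit $F^*_X{F_X}_*\Ox_X\twoheadrightarrow\Ox_X$ sends $e_i\mapsto t^i$. A direct induction then shows that $V_l$ is locally free of rank $p-l$ (so $V_p=0$) and that $\nabla_{\mathrm{can}}$ carries a generator of $V_l/V_{l+1}$ to a generator of $(V_{l-1}/V_l)\otimes_{\Ox_X}\Omg^1_X$. On the associated graded the Leibniz term disappears, so $\nabla_{\mathrm{can}}$ induces an $\Ox_X$-linear map $\overline{\nabla}_l\colon V_l/V_{l+1}\to(V_{l-1}/V_l)\otimes_{\Ox_X}\Omg^1_X$, which the local model shows to be an isomorphism at every point, hence globally. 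Composing the $\overline{\nabla}_l$ and using $V_0/V_1\cong\E$ (the counit) gives $V_l/V_{l+1}\cong\E\otimes_{\Ox_X}\Omg^{\otimes l}_X$, which is (2).

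For (3), reindex the decreasing filtration as an increasing one by $\E_i:=V_{p-i}$. By (2) the graded piece $\E_i/\E_{i-1}\cong\E\otimes_{\Ox_X}\Omg^{\otimes(p-i)}_X$ is the tensor of the semistable bundle $\E$ by a line bundle, hence semistable, of slope $\mu(\E)+(p-i)(2g-2)$; since $g\geq 2$ this slope is strictly decreasing in $i$. A filtration whose successive quotients are semistable of strictly decreasing slope is the Harder--Narasimhan filtration, so by its uniqueness the canonical filtration of $F^*_X{F_X}_*\E$ is its Harder--Narasimhan filtration.

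The main work is (4), and its linchpin is a horizontality observation. Let $\mathcal{W}\subsetneq{F_X}_*\E$ be nonzero; replacing it by its saturation only increases the degree, so I may assume the quotient is locally free and hence $F^*_X\mathcal{W}\hookrightarrow F^*_X{F_X}_*\E$ is a subbundle. Crucially, this inclusion is $F^*_X$ applied to $\mathcal{W}\hookrightarrow{F_X}_*\E$, so it is horizontal for the canonical connections; therefore $F^*_X\mathcal{W}$ is a $\nabla_{\mathrm{can}}$-invariant subbundle. Intersecting with the canonical filtration, set $\mathcal{W}_l:=F^*_X\mathcal{W}\cap V_l$. Invariance gives $\nabla_{\mathrm{can}}(\mathcal{W}_l)\subseteq(F^*_X\mathcal{W}\otimes_{\Ox_X}\Omg^1_X)\cap(V_{l-1}\otimes_{\Ox_X}\Omg^1_X)=\mathcal{W}_{l-1}\otimes_{\Ox_X}\Omg^1_X$, so $\overline{\nabla}_l$ restricts to an injection $\mathcal{W}_l/\mathcal{W}_{l+1}\hookrightarrow(\mathcal{W}_{l-1}/\mathcal{W}_l)\otimes_{\Ox_X}\Omg^1_X$. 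Hence the ranks $r_l:=\rk(\mathcal{W}_l/\mathcal{W}_{l+1})$ are non-increasing, while each $\mathcal{W}_l/\mathcal{W}_{l+1}\hookrightarrow\E\otimes_{\Ox_X}\Omg^{\otimes l}_X$, so semistability of $\E$ gives $\deg(\mathcal{W}_l/\mathcal{W}_{l+1})\leq r_l(\mu(\E)+l(2g-2))$.

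Summing over $l$ and using $p\deg\mathcal{W}=\deg F^*_X\mathcal{W}=\sum_l\deg(\mathcal{W}_l/\mathcal{W}_{l+1})$ together with $\mu({F_X}_*\E)=\mu(\E)/p+(g-1)(p-1)/p$ (read off from (2) by a degree count), the desired bound $\mu(\mathcal{W})\leq\mu({F_X}_*\E)$ reduces to $\sum_l(l-\tfrac{p-1}{2})r_l\leq 0$. This follows from $r_0\geq r_1\geq\cdots\geq r_{p-1}$ by pairing the indices $l$ and $p-1-l$, whose weights are opposite in sign; for $g=1$ the factor $2g-2$ vanishes and the bound is trivial, giving semistability of ${F_X}_*\E$ for $g\geq 1$. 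When $\E$ is stable and $g\geq 2$ the slope estimates are strict unless every nonzero $\mathcal{W}_l/\mathcal{W}_{l+1}$ equals $\E\otimes_{\Ox_X}\Omg^{\otimes l}_X$ and the rearrangement is an equality; the monotonicity of the $r_l$ then forces all of them to equal $\rk\E$, i.e. $\mathcal{W}={F_X}_*\E$, contradicting properness. Thus the inequality is strict and ${F_X}_*\E$ is stable. The single hardest point is the non-increase of the $r_l$: without it the weighted sum cannot be controlled, and it is exactly the horizontality of $F^*_X\mathcal{W}$—which turns the naive intersection filtration into one compatible with $\nabla_{\mathrm{can}}$—that secures it.
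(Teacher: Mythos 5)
The paper does not actually prove this lemma --- it is quoted verbatim from Sun \cite{Sun08i} --- so there is no internal proof to compare against; the relevant comparison is with Sun's original argument, and your reconstruction follows it faithfully and correctly: the definitional nature of (1), the local computation with horizontal frame and the $\Ox_X$-linear graded maps $\overline{\nabla}_l$ for (2), the ``semistable quotients of strictly decreasing slope'' characterization of the Harder--Narasimhan filtration for (3), and, for (4), the key horizontality of $F^*_X\mathcal{W}$, the induced injections $\mathcal{W}_l/\mathcal{W}_{l+1}\hookrightarrow(\mathcal{W}_{l-1}/\mathcal{W}_l)\otimes_{\Ox_X}\Omg^1_X$ forcing $r_0\geq r_1\geq\cdots\geq r_{p-1}$, and the pairing of $l$ with $p-1-l$ in the weighted sum, with the equality analysis giving stability. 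The only compressed spot is the induction in (2) establishing that $V_l$ is locally free of rank $p-l$ (in Sun's paper this is the explicit computation with the local generators $(t\otimes 1-1\otimes t)^l$, the same ones this paper later quotes in Proposition \ref{DimofQuot}), but the outline is exactly right and fills in correctly.
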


\begin{Proposition}\label{Prop:Injection}
Let $k$ be an algebraically closed field of characteristic $3$, and $X$ a smooth projective curve of genus $2$ over $k$. Let $\E$ be a stable vector bundle on $X$ with $\rk(\E)=3$, $\deg(\E)=d$ and one has non-trivial homomorphism $F^*_X(\E)\rightarrow\Ls$, where $\Ls$ is a line bundle with $\deg(\Ls)=d-1$. Then the adjoint homomorphism $\E\hookrightarrow{F_X}_*(\Ls)$ is an injection.
\end{Proposition}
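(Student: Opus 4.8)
The plan is to exploit the Frobenius adjunction together with the stability of ${F_X}_*(\Ls)$ and the instability bound of Lemma \ref{Lemma:InsHN}. First I would invoke the adjunction isomorphism $\Hom(F^*_X(\E),\Ls)\cong\Hom(\E,{F_X}_*(\Ls))$, so that the given non-trivial $\phi\colon F^*_X(\E)\to\Ls$ corresponds to a non-trivial adjoint $\tilde\phi\colon\E\to{F_X}_*(\Ls)$. Since $F_X$ is finite flat of degree $p=3$ one has $\rk({F_X}_*(\Ls))=3$, and comparing Euler characteristics $\chi({F_X}_*(\Ls))=\chi(\Ls)$ gives $\deg({F_X}_*(\Ls))=d+1$; in particular $\mu({F_X}_*(\Ls))=\tfrac{d+1}{3}>\tfrac{d}{3}=\mu(\E)$. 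By Lemma \ref{Lem:Sun}$(4)$, ${F_X}_*(\Ls)$ is stable. Writing $\mathcal I:=\operatorname{im}(\tilde\phi)$ and $s:=\rk(\mathcal I)$, the assertion is equivalent to $s=3$: if $s=3$ then $\ker(\tilde\phi)$ is a rank-$0$ subsheaf of the locally free $\E$, hence $0$.

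So suppose $s<3$. Then $\mathcal I$ is at once a proper quotient of the stable bundle $\E$ and a proper subsheaf of the stable bundle ${F_X}_*(\Ls)$. Every quotient of $\mathcal I$ is a quotient of $\E$ and every subsheaf of $\mathcal I$ is a subsheaf of ${F_X}_*(\Ls)$, so stability gives $\mu_{\min}(\mathcal I)>\tfrac{d}{3}$ and $\mu_{\max}(\mathcal I)<\tfrac{d+1}{3}$. If $s=1$ this reads $\tfrac{d}{3}<\deg(\mathcal I)<\tfrac{d+1}{3}$, impossible for an integer. Hence the only case left to exclude is $s=2$.

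For $s=2$ I would argue as follows. First $\mathcal I$ is semistable: otherwise its maximal destabilising line subbundle and the resulting line quotient would have integral degrees $\mu_{\max}(\mathcal I)>\mu_{\min}(\mathcal I)$, two distinct integers inside the open interval $(\tfrac{d}{3},\tfrac{d+1}{3})$ of length $\tfrac13$, which is absurd. Now pull back along $F_X$: exactness of $F^*_X$ makes $F^*_X(\mathcal I)\hookrightarrow V_0:=F^*_X{F_X}_*(\Ls)$ a rank-$2$ subsheaf, and the triangle identity gives $\phi=\epsilon\circ F^*_X(\tilde\phi)$, where $\epsilon\colon V_0\to\Ls$ is the co-unit with $\ker(\epsilon)=V_1$ as in Definition \ref{CanFil}. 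Since $\phi\neq0$ we get $F^*_X(\mathcal I)\not\subseteq V_1$. Set $W:=F^*_X(\mathcal I)\cap V_1=\ker\bigl(F^*_X(\mathcal I)\xrightarrow{\epsilon}\Ls\bigr)$. If $\rk(W)=0$ then $F^*_X(\mathcal I)$ injects into $V_0/V_1\cong\Ls$, impossible on ranks; so $\rk(W)=1$ and $F^*_X(\mathcal I)/W$ embeds into $\Ls$, whence $\deg\bigl(F^*_X(\mathcal I)/W\bigr)\le d-1$. Now $\deg F^*_X(\mathcal I)=3\deg(\mathcal I)$ is an integer in $(2d,2d+2)$, hence equals $2d+1$, forcing $\deg(W)\ge(2d+1)-(d-1)=d+2$. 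Thus the saturation $L_1\supseteq W$ is a destabilising line subbundle of $F^*_X(\mathcal I)$ with $\mu(L_1)-\mu\bigl(F^*_X(\mathcal I)/L_1\bigr)=2\deg(L_1)-\deg F^*_X(\mathcal I)\ge 2(d+2)-(2d+1)=3$. But $\mathcal I$ is semistable, so Lemma \ref{Lemma:InsHN} bounds this gap by $2g-2=2$, a contradiction. Hence $s=2$ cannot occur, so $s=3$ and $\tilde\phi$ is injective.

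The crux — the only step beyond pure slope bookkeeping — is the exclusion of $s=2$: stability of ${F_X}_*(\Ls)$ by itself leaves room for a rank-$2$ image (exactly when $d\equiv1\pmod 3$), and it is the interaction of the co-unit kernel $V_1$ from the canonical filtration with the instability bound of Lemma \ref{Lemma:InsHN} applied to $F^*_X(\mathcal I)$ that finally rules it out.
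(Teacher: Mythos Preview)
Your argument is correct, but it takes a different route from the paper for the rank-$2$ case. The paper invokes \cite[Corollary 2.4]{Sun10ii}, which gives, for any subsheaf $\G$ of ${F_X}_*(\Ls)$ of rank $s<p$, the strengthened inequality
\[
\mu(\G)-\mu({F_X}_*(\Ls))<-\frac{p-s}{p}(g-1)=-\frac{3-s}{3}.
\]
Combined with $\mu(\G)>d/3$ (stability of $\E$) and $\mu({F_X}_*(\Ls))=(d+1)/3$, this yields $d/3<\mu(\G)<(s+d-2)/3$, which is empty for $s=1,2$ in one stroke.

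You instead use only the \emph{ordinary} stability of ${F_X}_*(\Ls)$ from Lemma \ref{Lem:Sun}(4), giving merely $\mu(\G)<(d+1)/3$. That suffices for $s=1$, but for $s=2$ (precisely when $d\equiv 1\pmod 3$) it leaves the possibility $\deg(\mathcal I)=(2d+1)/3$; you then eliminate it by pulling back, intersecting with the canonical filtration of $F^*_X{F_X}_*(\Ls)$, and invoking Lemma \ref{Lemma:InsHN}. In effect your detour reproves, by hand, the special case of Sun's sharper bound that is needed here. The payoff of your approach is that it stays entirely within the lemmas already stated in the paper (Lemma \ref{Lemma:InsHN} and Lemma \ref{Lem:Sun}) and makes explicit why the case $s=2$ is delicate; the paper's approach is considerably shorter because the hard work is packaged in \cite[Corollary 2.4]{Sun10ii}.
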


\begin{proof}
By adjunction, there is a nontrivial homomorphism $\E\rightarrow{F_X}_*(\Ls)$. Denote the image by $\G$. Suppose that $1\leq\rk(\G)\leq 2$, then by \cite[Corollary 2.4]{Sun10ii} and the stability of ${F_X}_*(\Ls)$, we have
$$\mu(\G)-\mu({F_X}_*(\Ls))<-\frac{p-\rk(\G)}{p}(g-1)=-\frac{3-\rk(\G)}{3}.$$
By Grothendieck-Riemann-Roch theorem, we have $\deg({F_X}_*(\Ls))=d+1$ (cf. \cite[Lemma 4.2]{Sun08i}), so $$\mu(\G)<-\frac{3-\rk(\G)}{3}+\mu({F_X}_*(\Ls))=\frac{\rk(\G)+d-2}{3}.$$
On the other hand, by stability of $\E$, we have $\mu(\G)>\frac{d}{3}$. As $1\leq\rk(\G)\leq 2$, this is impossible. Hence, $\rk(\G)=3$.
Therefore $\E\cong\G$, i.e. the adjoint homomorphism $\E\hookrightarrow{F_X}_*(\Ls)$ is an injection.
\end{proof}

\begin{Proposition}\label{Prop:Subsheaf}
Let $k$ be an algebraically closed field of characteristic $3$, and $X$ a smooth projective curve of genus $2$ over $k$. Let $\Ls$ be a line bundle on $X$ with $\deg(\Ls)=d-1$, $\E$ a subsheaf of $F^*_X(\Ls)$ with $\rk(\E)=3$ and $\deg(\E)=d$. Then $\E$ is a stable vector bundle.
\end{Proposition}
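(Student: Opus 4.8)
The plan is to realize $\E$ as a full-rank subsheaf of the stable bundle ${F_X}_*(\Ls)$ (the Frobenius direct image) and to verify the slope condition for $\E$ directly, using the quantitative slope bound for subsheaves of Frobenius direct images. First I would record the numerics. Since $\Ls$ is a line bundle, ${F_X}_*(\Ls)$ is a vector bundle of rank $p=3$, and by Grothendieck-Riemann-Roch $\deg({F_X}_*(\Ls))=\deg(\Ls)+(p-1)(g-1)=(d-1)+2=d+1$ (cf. \cite[Lemma 4.2]{Sun08i}), so $\mu({F_X}_*(\Ls))=(d+1)/3$. Moreover, by Lemma \ref{Lem:Sun}(4), ${F_X}_*(\Ls)$ is stable. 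On the other hand $\mu(\E)=d/3$, and since $\E$ is a subsheaf of the locally free sheaf ${F_X}_*(\Ls)$ on a smooth curve it is torsion-free, hence locally free; so it remains only to check the stability inequality.

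To test stability, let $\F\subset\E$ be any nonzero proper subsheaf, of rank $r'\in\{1,2\}$; composing the inclusions gives $\F\subset{F_X}_*(\Ls)$. Here the naive stability bound $\mu(\F)<\mu({F_X}_*(\Ls))=(d+1)/3$ is not by itself enough, since for $r'=2$ it fails to separate $\mu(\F)$ from $\mu(\E)=d/3$. The key is therefore to invoke the sharper estimate \cite[Corollary 2.4]{Sun10ii} already used in the proof of Proposition \ref{Prop:Injection}, which for a subsheaf of rank $r'$ with $1\leq r'\leq p-1$ yields
$$\mu(\F)-\mu({F_X}_*(\Ls))<-\frac{p-r'}{p}(g-1)=-\frac{3-r'}{3}.$$

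Substituting $\mu({F_X}_*(\Ls))=(d+1)/3$ gives $\mu(\F)<(d-2+r')/3$, i.e. $\mu(\F)<(d-1)/3$ when $r'=1$ and $\mu(\F)<d/3$ when $r'=2$. In both cases $\mu(\F)<d/3=\mu(\E)$, so $\E$ is stable. The only real subtlety I anticipate is precisely this rank-$2$ borderline: the ordinary stability of ${F_X}_*(\Ls)$ lands on the wrong side of the inequality, so the argument genuinely relies on the $(g-1)$-gap of \cite[Corollary 2.4]{Sun10ii} specialized to $(p,g)=(3,2)$, which makes the bound for rank-$2$ subsheaves collapse exactly onto $\mu(\E)$ as a \emph{strict} inequality. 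As long as that strictness is available, the proof is otherwise routine.
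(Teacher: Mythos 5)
Your proof is correct and is essentially the paper's own argument: both hinge on applying the strict slope bound of \cite[Corollary 2.4]{Sun10ii}, together with the stability of ${F_X}_*(\Ls)$, to an arbitrary proper subsheaf $\F\subset\E$ of rank $r'\leq 2$, yielding $\mu(\F)<\frac{d-2+r'}{3}\leq\frac{d}{3}=\mu(\E)$. The additional details you supply (the Grothendieck--Riemann--Roch degree computation, local freeness of $\E$, and the remark that ordinary stability of ${F_X}_*(\Ls)$ alone would fail for rank-$2$ subsheaves) are accurate but coincide in substance with what the paper's proof uses implicitly.
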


\begin{proof}
Let $\G\subset\E$ be a subsheaf of $\E$ with $\rk(\G)<\rk(\E)=3$. By \cite[Corollary 2.4]{Sun10ii} and the stability of ${F_X}_*(\Ls)$, we have
$$\mu(\G)-\mu({F_X}_*(\Ls))<-\frac{p-\rk(\G)}{p}(g-1)=-\frac{3-\rk(\G)}{3}.$$
It follows that
$$\mu(\G)<-\frac{3-\rk(\G)}{3}+\mu({F_X}_*(\Ls))=\frac{\rk(\G)+d-2}{3}\leq\frac{d}{3}=\mu(\E).$$
Thus $\E$ is a stable vector bundle.
\end{proof}

By Proposition \ref{Prop:Injection}, Proposition \ref{Prop:Subsheaf} and the classification of Harder-Narasimhan polygons of Frobenius pull backs of Frobenius destabilized stable vector bundles in the case $(p,g,r,d)=(3,2,3,d)$, we know that any Frobenius destabilized stable bundle $[\E]\in\M^s_{X}(3,d)$ with $\HNP(F^*_X(\E))\in\{\Ps_2(d),\Ps_3(d),\Ps_4(d)\}$ can be embedded into ${F_X}_*(\Ls)$ for some line bundle $\Ls$ of degree $d-1$.

\section{Geometric Properties of Quot Schemes}

We first recall some construction of Quot schemes in \cite[Section 4]{Li18}. Let $k$ be an algebraically closed field of characteristic $p>0$, $X$ a smooth projective curve of genus $g$ over $k$, $F_X:X\rightarrow X$ be the absolute Frobenius morphism. Let $\Pic^{(t)}(X)$ be the Picard scheme parameterizes all line bundles of degree $t$ on $X$. Using the same notations in \cite[Section 4]{Li18}, we relist the Quot schemes as following
\begin{eqnarray*}
\Quot_X(r,d,\Pic^{(t)}(X))&:=&\{~[\E\hookrightarrow{F_X}_*(\Ls)]~|~\rk(\E)=r,\deg(\E)=d,\Ls\in\Pic^{(t)}(X)~\}\\
\Quot_X(r,d,\Pic^{(t)}(X),\Ps)&:=&\{~[\E\hookrightarrow{F_X}_*(\Ls)]\in\Quot_X(r,d,\Pic^{(t)}(X))~|~\HNP(F^*_X(\E))=\Ps~\}.\\
\Quot_X(r,d,\Pic^{(t)}(X),\Ps^+)&:=&\{~[\E\hookrightarrow{F_X}_*(\Ls)]\in\Quot_X(r,d,\Pic^{(t)}(X))~|~\HNP(F^*_X(\E))\succcurlyeq\Ps~\}\\
\Quot^\sharp_X(r,d,\Pic^{(t)}(X))&:=&\left\{~[\E\hookrightarrow{F_X}_*(\Ls)]~\Bigg|
\begin{array}{lll}
\rk(\E)=r,\deg(\E)=d,\Ls\in\Pic^{(t)}(X),\\
\text{adjoint homomorphism}~F^*_X(\E)\rightarrow\Ls\\
\text{is surjective}.
\end{array}
\right\}\\
\Quot^\sharp_X(r,d,\Pic^{(t)}(X),\Ps)&:=&\{~[\E\hookrightarrow{F_X}_*(\Ls)]\in\Quot^\sharp_X(r,d,\Pic^{(t)}(X))~|~\HNP(F^*_X(\E))=\Ps~\}.\\
\Quot^\sharp_X(r,d,\Pic^{(t)}(X),\Ps^+)&:=&\{~[\E\hookrightarrow{F_X}_*(\Ls)]\in\Quot^\sharp_X(r,d,\Pic^{(t)}(X))~|~\HNP(F^*_X(\E))\succcurlyeq\Ps~\}\\
\Quot_X(r,d,\Ls)&:=&\{~[\E\hookrightarrow{F_X}_*(\Ls)]~|~\rk(\E)=r,\deg(\E)=d~\}\\
\Quot_X(r,d,\Ls,\Ps)&:=&\{~[\E\hookrightarrow{F_X}_*(\Ls)]\in\Quot_X(r,d,\Ls)~|~\HNP(F^*_X(\E))=\Ps~\}\\
\Quot_X(r,d,\Ls,\Ps^+)&:=&\{~[\E\hookrightarrow{F_X}_*(\Ls)]\in\Quot_X(r,d,\Ls)~|~\HNP(F^*_X(\E))\succcurlyeq\Ps~\}\\
\Quot^\sharp_X(r,d,\Ls)&:=&\left\{~[\E\hookrightarrow{F_X}_*(\Ls)]~\Bigg|
\begin{array}{lll}
\rk(\E)=r,\deg(\E)=d,~\text{adjoint}\\
\text{homomorphism}~F^*_X(\E)\rightarrow\Ls~\text{is surjective}.
\end{array}
\right\}\\
\Quot^\sharp_X(r,d,\Ls,\Ps)&:=&\{~[\E\hookrightarrow{F_X}_*(\Ls)]\in\Quot^\sharp_X(r,d,\Ls)~|~\HNP(F^*_X(\E))=\Ps~\}\\
\Quot^\sharp_X(r,d,\Ls,\Ps^+)&:=&\{~[\E\hookrightarrow{F_X}_*(\Ls)]\in\Quot^\sharp_X(r,d,\Ls)~|~\HNP(F^*_X(\E))\succcurlyeq\Ps~\}\\
\end{eqnarray*}

In this section, we are interesting in the Frobenius stratification of moduli spaces of vector bundles in the case $(p,g,r,d,t)=(3,2,3,d,d-1)$.
In this case, the scheme $\Quot_X(3,d,\Pic^{(d-1)}(X))$ parameterizes the subsheaves of ${F_X}_*(\Ls)$ with rank $3$ and degree $d$ for any line bundle $\Ls$ of degree $d-1$ on $X$. By Proposition \ref{Prop:Injection} and Proposition \ref{Prop:Subsheaf}, we know that these vector bundles are stable. This induces a natural morphism
$$\theta:\Quot_X(3,d,\Pic^{(d-1)}(X))\rightarrow\M^s_X(3,d):[\E\hookrightarrow{F_X}_*(\Ls)]\mapsto[\E]$$
Now, we analysis the structure of the Quot scheme $\Quot_X(3,d,\Pic^{(d-1)}(X))$. Let $[\E\hookrightarrow{F_X}_*(\Ls)]$ be a closed point of $\Quot_X(3,d,\Pic^{(d-1)}(X))$, where $\Ls\in\Pic^{(d-1)}(X)$. The non-trivial adjoint homomorphism $F^*_X(\E)\rightarrow\Ls$ implies that $$\mu(F^*_X(\E))>\mu(\Ls)\geq\mu_{\mathrm{min}}(F^*_X(\E)),$$ so $\E$ is a Frobenius destabilized stable vector bundle.

\begin{Lemma}[S. S. Shatz \cite{Shatz77} Theorem 2 and Theorem 3]\label{Shatz77}
Let $k$ be an algebraically closed field, $X$ a smooth projective variety over $k$, $H$ an ample divisor on $X$. Consider the Harder-Narasimhan filtrations of torsion free sheaves on $X$ in the sense of Mumford's semistability. Then
\begin{itemize}
\item[$(1)$] For any torsion free sheaf $\E$ and any subsheaf $\F\subseteq\E$, we have the point $(\rk(\F),\deg(\F))$ lies below $\HNP(\E)$.
\item[$(2)$] Let $\Ec$ be a flat family of torsion free sheaves of rank $r$ and degree $d$ on $S\times_kX$ parameterized by a scheme $S$ of finite type over $k$. Then for any convex polygon $\Ps\in\ConPgn(r,d)$, the subset $$S_{\Ps}=\{s\in S~|~\HNP(\Ec|_s)\succcurlyeq\Ps\}$$ is a closed subset of $S$.
\end{itemize}
\end{Lemma}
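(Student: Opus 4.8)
The plan is to establish the subsheaf estimate $(1)$ first and then deduce the semicontinuity statement $(2)$ from it, following Shatz's strategy. For part $(1)$, I would begin with the Harder--Narasimhan filtration $0=\E_0\subset\E_1\subset\cdots\subset\E_m=\E$ with strictly decreasing slopes $\mu_1>\mu_2>\cdots>\mu_m$, so that $\HNP(\E)$ is the convex polygon with vertices $(\rk(\E_i),\deg(\E_i))$. Given a subsheaf $\F\subseteq\E$ of rank $s$, I would form the induced filtration $G_i:=\F\cap\E_i$ and note that each $G_i/G_{i-1}\cong(G_i+\E_{i-1})/\E_{i-1}$ injects into the semistable, torsion-free quotient $\E_i/\E_{i-1}$. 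Writing $a_i:=\rk(G_i/G_{i-1})$ one has $0\le a_i\le\rk(\E_i/\E_{i-1})$, $\sum_i a_i=s$, and by semistability $\deg(G_i/G_{i-1})\le a_i\mu_i$; summing the (additive) degrees over the filtration gives $\deg(\F)\le\sum_i a_i\mu_i$. The remaining point is purely combinatorial: since the $\mu_i$ are decreasing, the maximum of $\sum_i a_i\mu_i$ over admissible $(a_i)$ with $\sum_i a_i=s$ is attained by the greedy choice that fills the steepest graded pieces first, and this maximum equals the ordinate of $\HNP(\E)$ at abscissa $s$. Hence $(\rk(\F),\deg(\F))$ lies below $\HNP(\E)$.

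For part $(2)$, I would first translate the partial order $\succcurlyeq$ into the existence of subsheaves. By part $(1)$ the ordinate of $\HNP(\Ec|_s)$ at each integer abscissa $\rho$ equals the maximal degree of a rank-$\rho$ subsheaf of $\Ec|_s$, so $\HNP(\Ec|_s)\succcurlyeq\Ps$ holds if and only if, for each vertex $(\rho_j,\delta_j)$ of $\Ps$, the fibre $\Ec|_s$ admits a subsheaf of rank $\rho_j$ and degree $\ge\delta_j$. Since a polygon in $\ConPgn(r,d)$ has finitely many vertices, it suffices to show that each such condition cuts out a closed subset of $S$ and then to intersect these finitely many closed sets.

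To prove closedness of a single condition I would invoke relative Quot schemes. A rank-$\rho_j$ subsheaf $\F\subseteq\Ec|_s$ of degree $\ge\delta_j$ corresponds to a quotient $\Ec|_s\twoheadrightarrow Q$ with $\rk(Q)=r-\rho_j$ and $\deg(Q)\le d-\delta_j$. The crucial input is boundedness: since $\Ec$ is flat over the finite-type base $S$, the fibres $\Ec|_s$ form a bounded family, whence $\mu_{\max}(\Ec|_s)$ is bounded uniformly in $s$; by part $(1)$ this bounds the degree of any rank-$\rho_j$ subsheaf from above, uniformly over $S$. Consequently the degrees of the relevant $\F$ lie in a finite range, so only finitely many Hilbert polynomials $P$ occur for $Q$. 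For each such $P$, Grothendieck's relative Quot scheme $\Quot^P_{\Ec/S}$ is projective over $S$, hence its image in $S$ is closed; the finite union of these images is exactly the locus where $\Ec|_s$ admits a subsheaf of rank $\rho_j$ and degree $\ge\delta_j$. Intersecting over the vertices of $\Ps$ then exhibits $S_\Ps$ as a finite intersection of closed sets.

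I expect the main obstacle to be the boundedness step, since it is what licenses the passage from the pointwise estimate of part $(1)$ to a uniform family-wide bound on slopes, and only this uniform bound lets one reduce to finitely many projective Quot schemes over $S$ and apply properness. The combinatorial maximization in part $(1)$ and the isomorphism-theorem identifications are routine by comparison.
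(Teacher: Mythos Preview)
The paper does not prove this lemma; it merely records it as a citation to Shatz's 1977 paper, so there is no in-paper argument to compare against. Your sketch of part~$(1)$ is the standard one and is correct.

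Your argument for part~$(2)$, however, contains a genuine gap. You assert that ``the ordinate of $\HNP(\Ec|_s)$ at each integer abscissa $\rho$ equals the maximal degree of a rank-$\rho$ subsheaf,'' and hence that $\HNP(\Ec|_s)\succcurlyeq\Ps$ is equivalent to the existence, for each vertex $(\rho_j,\delta_j)$ of $\Ps$, of a rank-$\rho_j$ subsheaf of degree $\ge\delta_j$. Part~$(1)$ only gives the inequality $\le$; equality can fail badly when $\rho_j$ is not a break-point of the Harder--Narasimhan filtration. For a concrete obstruction, take $\E$ on a high-genus curve with HN filtration $0\subset\E_1\subset\E$ where $\E_1$ is a line bundle of degree $10$ and $\E/\E_1$ is a general stable rank-$3$ bundle of degree $3$ whose maximal line subbundle has very negative degree (say $\le -5$). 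Then $\HNP(\E)$ has ordinate $11$ at $\rho=2$, and $\HNP(\E)\succcurlyeq\Ps$ for the convex polygon $\Ps$ with vertices $(0,0),(2,7),(4,13)$; yet every rank-$2$ subsheaf of $\E$ has degree $\le 5$. Thus $S_\Ps$ is strictly larger than your intersection $\bigcap_j C_j$, and closedness of the $C_j$ does not yield closedness of $S_\Ps$.

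Shatz's actual route bypasses this: one reduces by the valuative criterion to a discrete valuation ring, extends the Harder--Narasimhan filtration of the generic fibre (after saturating) across the family, and restricts to the special fibre to obtain subsheaves with the \emph{same} numerical invariants as the generic HN pieces. Part~$(1)$ then forces $\HNP(\Ec|_0)\succcurlyeq\HNP(\Ec|_\eta)\succcurlyeq\Ps$. Your boundedness and Quot-scheme ingredients are exactly what make this extension step work, so the materials you assembled are the right ones; only the logical packaging via the false equivalence needs to be replaced by the specialization argument.
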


\begin{Proposition}\label{Prop:Intersection}
Let $k$ be an algebraically closed field of characteristic $3$, and $X$ a smooth projective curve of genus $2$ over $k$. Let $\Ls$ be a line bundle on $X$ with $\deg(\Ls)=d-1$, $0\subset E_2\subset E_1\subset F^*_X({F_X}_*(\Ls))$ the canonical filtration of $F^*_X({F_X}_*(\Ls))$. Let $[\E\hookrightarrow{F_X}_*(\Ls)]\in\Quot_X(3,d,\Ls)$. Then $\HNP(F^*_X(\E))\in\{\Ps_2(d),\Ps_3(d),\Ps_4(d)\}$, and
\begin{itemize}
\item[$(1)$] $\HNP(F^*_X(\E))=\Ps_4(d)$ if and only if $\deg(F^*_X(\E)\cap E_2)=d+2$ if and only if the adjoint homomorphism $F^*_X(\E)\rightarrow\Ls$ is not surjective. In this case, the Harder-Narasimhan filtration of $F^*_X(\E)$ is $$0\subset F^*_X(\E)\cap E_2\subset F^*_X(\E)\cap E_1\subset F^*_X(\E).$$
\item[$(2)$] $\HNP(F^*_X(\E))=\Ps_3(d)$ if and only if $\deg(F^*_X(\E)\cap E_2)=d+1$. In this case, $[\E\hookrightarrow{F_X}_*(\Ls)]\in\Quot^\sharp_X(3,d,\Ls)$ and the Harder-Narasimhan filtration of $F^*_X(\E)$ is $$0\subset F^*_X(\E)\cap E_2\subset F^*_X(\E)\cap E_1\subset F^*_X(\E).$$
\item[$(3)$] $\HNP(F^*_X(\E))=\Ps_2(d)$ if and only if $\deg(F^*_X(\E)\cap E_2)=d$. In this case, $[\E\hookrightarrow{F_X}_*(\Ls)]\in\Quot^\sharp_X(3,d,\Ls)$ and the Harder-Narasimhan filtration of $F^*_X(\E)$ is $$0\subset F^*_X(\E)\cap E_1\subset F^*_X(\E).$$
\end{itemize}
\end{Proposition}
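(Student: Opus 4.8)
The plan is to pull the inclusion $\E\hookrightarrow{F_X}_*(\Ls)$ back by the (flat, hence exact) functor $F^*_X$ and to compare $W:=F^*_X(\E)$ with the canonical filtration $0\subset E_2\subset E_1\subset V_0:=F^*_X({F_X}_*(\Ls))$. By Lemma~\ref{Lem:Sun} applied to the (semi)stable line bundle $\Ls$, this filtration is the Harder--Narasimhan filtration of $V_0$, with rank-one graded pieces $E_2\cong\Ls\otimes\Omg^{\otimes2}_X$, $E_1/E_2\cong\Ls\otimes\Omg^1_X$ and $V_0/E_1\cong\Ls$ of degrees $d+3$, $d+1$ and $d-1$; in particular $\mu_{\max}(V_0)=d+3$, $\deg V_0=3d+3$, and $W\subseteq V_0$ has rank $3$, degree $3d$, so $V_0/W$ has length $3$. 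Intersecting $W$ with the filtration gives $0\subset W\cap E_2\subset W\cap E_1\subset W$ with rank-one graded pieces of degrees $a:=\deg(W\cap E_2)$, $b$ and $c$; comparing with $V_0$, the defects $\alpha=(d+3)-a$, $\beta=(d+1)-b$, $\gamma=(d-1)-c$ are non-negative with $\alpha+\beta+\gamma=\deg V_0-\deg W=3$. The adjoint map $F^*_X(\E)\to\Ls$ is the composite $W\hookrightarrow V_0\twoheadrightarrow V_0/E_1=\Ls$, whose image $W/(W\cap E_1)$ has degree $c=(d-1)-\gamma$; thus the adjoint is surjective if and only if $\gamma=0$.

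I would determine $\HNP(W)$ from two bounds combined with Lemma~\ref{Lemma:InsHN}. By Lemma~\ref{Shatz77}(1) the subsheaves $W\cap E_2$ and $W\cap E_1$ force $\HNP(W)$ on or above the convex hull of $(0,0),(1,a),(2,a+b),(3,3d)$, while $\HNP(W)$ lies on or below the polygon whose slopes are the decreasing rearrangement of $(a,b,c)$ (the polygon of the associated graded). Moreover $\mu_{\max}(W)\le\mu_{\max}(V_0)=d+3$, and any rank-one $\F\subseteq W$ with $\F\cap E_2=0$ injects into $V_0/E_2$, so $\deg\F\le\mu_{\max}(V_0/E_2)=d+1$. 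Since $W=F^*_X(\E)$, Lemma~\ref{Lemma:InsHN} bounds consecutive Harder--Narasimhan slope gaps of $W$ by $2g-2=2$. I would first rule out $\mu_{\max}(W)=d+3$: a rank-one subsheaf of degree $d+3$ makes $\mu_{\min}(W)\le d-\tfrac32$, giving a total gap $>4$ over at most two steps, while a rank-two subsheaf of degree $\ge 2d+6$ contradicts $\HNP(V_0)$. Hence $\mu_{\max}(W)\le d+2$, so $a\le d+2$, and as $\alpha\le3$ we get $a\in\{d,d+1,d+2\}$.

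The case analysis then runs on $a$. If $a=d+2$, then $\mu_{\max}(W)=d+2$ is realized by $W\cap E_2$, and applying the gap bound twice forces Harder--Narasimhan slopes $d+2,d,d-2$; thus $(\alpha,\beta,\gamma)=(1,1,1)$, $\HNP(W)=\Ps_4(d)$ with filtration $0\subset W\cap E_2\subset W\cap E_1\subset W$, and $\gamma=1$ means the adjoint is not surjective. If $a=d+1$, the gap bound excludes $(\alpha,\beta,\gamma)=(2,0,1)$ (it would give $W\cap E_1$ of slope $d+1$ over a rank-one quotient of slope $d-2$, a gap of $3$), leaving $(2,1,0)$; then the lower and upper bounds coincide and give $\HNP(W)=\Ps_3(d)$, again with filtration $0\subset W\cap E_2\subset W\cap E_1\subset W$ and $\gamma=0$, so $[\E\hookrightarrow{F_X}_*(\Ls)]\in\Quot^\sharp_X(3,d,\Ls)$.

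The remaining case $a=d$ forces $(\alpha,\beta,\gamma)=(3,0,0)$, whence $(W\cap E_1)/(W\cap E_2)=E_1/E_2$ and $c=d-1$ (adjoint surjective); here the bounds only sandwich $\Ps_2(d)\preccurlyeq\HNP(W)\preccurlyeq\Ps_3(d)$, and the statement reduces to proving that $W\cap E_1$ (rank $2$, degree $2d+1$) is semistable, i.e. that $W$ has no rank-one subsheaf of degree $d+1$. Any such $\F$ would satisfy $\F\cap E_2=0$ (since $a=d<d+1$) and map isomorphically onto $E_1/E_2$, hence split the canonical extension $0\to E_2\to E_1\to E_1/E_2\to0$. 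I expect this to be the main obstacle: one must show this extension is non-split. I would deduce it from the canonical connection of Definition~\ref{CanFil}: by Lemma~\ref{Lem:Sun} $\nabla_{\mathrm{can}}$ induces isomorphisms $E_2\xrightarrow{\sim}(E_1/E_2)\otimes\Omg^1_X$ and $E_1/E_2\xrightarrow{\sim}\Ls\otimes\Omg^1_X$, and these isomorphisms obstruct any splitting $E_1/E_2\hookrightarrow E_1$. Granting non-splitness, $W\cap E_1$ is semistable, $\HNP(W)=\Ps_2(d)$ with filtration $0\subset W\cap E_1\subset W$, and reading $a$ off each $\Ps_i(d)$ yields the three equivalences.
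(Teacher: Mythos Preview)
Your overall architecture---intersecting $W=F^*_X(\E)$ with the canonical filtration of $V_0=F^*_X{F_X}_*(\Ls)$ and reading off $\HNP(W)$ from $a=\deg(W\cap E_2)$---is exactly the paper's, and your defect bookkeeping $\alpha+\beta+\gamma=3$ together with the sandwich between the convex hull of the filtration points and the decreasing-rearrangement polygon is a clean way to organise the case analysis. The paper reaches the same trichotomy $a\in\{d,d+1,d+2\}$ but via slightly different tools: instead of Lemma~\ref{Lemma:InsHN}, it repeatedly uses the injections
\[
(W\cap E_1)/(W\cap E_2)\hookrightarrow \bigl(W/(W\cap E_1)\bigr)\otimes\Omg^1_X,
\qquad
W\cap E_2\hookrightarrow \bigl((W\cap E_1)/(W\cap E_2)\bigr)\otimes\Omg^1_X
\]
coming from $\nabla_{\mathrm{can}}$ restricted to $W$, which give the same degree constraints you obtain from the gap bound. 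Either route works.

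The genuine gap is precisely where you flag it. To rule out $\HNP(W)=\Ps_3(d)$ when $a=d$ you need the extension $0\to E_2\to E_1\to E_1/E_2\to 0$ to be non-split, and your proposed justification---that the $\nabla_{\mathrm{can}}$-induced isomorphisms $E_2\cong(E_1/E_2)\otimes\Omg^1_X$ and $E_1/E_2\cong\Ls\otimes\Omg^1_X$ ``obstruct any splitting''---does not work. Those isomorphisms on graded pieces are the content of Lemma~\ref{Lem:Sun}(2) and hold for \emph{every} $(p,g)$, whereas the splitting behaviour of the canonical filtration genuinely depends on the pair $(p,g)$: by Lemma~\ref{Lem:Grothendieck} and Corollary~\ref{Cor:NonSplitting}, the filtration on $F^*_X{F_X}_*(\Ls)$ is non-split exactly when $p\nmid(g-1)$. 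For $(p,g)=(3,2)$ one has $3\nmid 1$, so $E_1$ is indeed non-split, but this is a nontrivial input (attributed to Grothendieck--Raynaud) and not a formal consequence of the connection. The paper's proof invokes Corollary~\ref{Cor:NonSplitting} at exactly this point: if $E'\subset W$ had degree $d+1$ with $E'\nsubseteq W\cap E_2$, then $E'\to E_1/E_2$ is an isomorphism and $E_1\cong E'\oplus E_2$, contradicting the corollary. Once you replace your hand-wave by that citation, your argument is complete and essentially equivalent to the paper's.
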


\begin{proof}
The canonical filtration $0\subset E_2\subset E_1\subset F^*_X({F_X}_*(\Ls))$ induces the filtration $$0\subset F^*_X(\E)\cap E_2\subset F^*_X(\E)\cap E_1\subset F^*_X(\E).$$
Then the injections $F^*_X(\E)\cap E_2\hookrightarrow E_2$ and $$F^*_X(\E)/(F^*_X(\E)\cap E_1)\hookrightarrow F^*_X({F_X}_*(\Ls))/E_1\cong\Ls$$ imply that $\deg(F^*_X(\E)\cap E_2)\leq d+3$ and $\deg(F^*_X(\E)/(F^*_X(\E)\cap E_1))\leq d-1$. Suppose that $[\E\hookrightarrow{F_X}_*(\Ls)]\in\Quot_X(3,d,\Ls)$ such that $F^*_X(\E)$ is semistable or $\HNP(F^*_X(\E))=\Ps_1(d)$, then $\mu_{\mathrm{min}}(F^*_X(\E))\geq\frac{2d-1}{2}$. This contradicts the fact $$\deg(F^*_X(\E)/(F^*_X(\E)\cap E_1))\leq d-1.$$ Hence $\HNP(F^*_X(\E))\in\{\Ps_2(d),\Ps_3(d),\Ps_4(d)\}$.

We first claim that $$d\leq\deg(F^*_X(\E)\cap E_2)\leq d+2.$$ The fact $\deg(F^*_X(\E)\cap E_2)\leq d+2$ followed by Lemma \ref{Shatz77}(1) and the classification of Harder-Narasimhan polygons of Frobenius pull backs of Frobenius destabilized stable vector bundles in the case $(p,g,r,d)=(3,2,3,d)$. On the other hand, suppose that $\deg(F^*_X(\E)\cap E_2)\leq d-1$. Since $\deg(F^*_X(\E)/(F^*_X(\E)\cap E_1))\leq d-1$, we have $$\deg((F^*_X(\E)\cap E_1)/(F^*_X(\E)\cap E_2))\geq d+2.$$ Then the injection
$$(F^*_X(\E)\cap E_1)/(F^*_X(\E)\cap E_2)\hookrightarrow(F^*_X(\E)/(F^*_X(\E)\cap E_1))\otimes_{\Ox_X}\Omg^1_X$$ induces a contradiction, since $\deg((F^*_X(\E)/(F^*_X(\E)\cap E_1))\otimes_{\Ox_X}\Omg^1_X)\leq d+1$. This completes the proof of the claim.

(1).
I. If $\HNP(F^*_X(\E))=\Ps_4(d)$, there exists unique maximal destabilizing sub-line bundle $E'\subset F^*_X(\E)$ with $\deg(E')=d+2$. Suppose that $E'\nsubseteq F^*_X(\E)\cap E_1$, then the composition $$E'\hookrightarrow F^*_X(\E)\hookrightarrow F^*_X({F_X}_*(\Ls))\twoheadrightarrow F^*_X({F_X}_*(\Ls))/E_1\cong\Ls$$ is not trivial. This induces a contradiction since $\deg(E')>\deg(\Ls)$. Suppose that $E'\subset F^*_X(\E)\cap E_1$ and $E'\nsubseteq F^*_X(\E)\cap E_2$, then the composition $$E'\hookrightarrow F^*_X(\E)\cap E_1\hookrightarrow E_1\twoheadrightarrow E_1/E_2$$ is not trivial. This induces a contradiction since $\deg(E')>\deg(E_1/E_2)$. Hence $E'\subset F^*_X(\E)\cap E_2$. In fact $E'=F^*_X(\E)\cap E_2$. Thus $\deg(F^*_X(\E)\cap E_2)=d+2$.

II. If $\deg(F^*_X(\E)\cap E_2)=d+2$, then the injection $$F^*_X(\E)\cap E_2\hookrightarrow((F^*_X(\E)\cap E_1)/(F^*_X(\E)\cap E_2))\otimes_{\Ox_X}\Omg^1_X$$ implies that $\deg((F^*_X(\E)\cap E_1)/(F^*_X(\E)\cap E_2))\geq d$. So $$\deg(F^*_X(\E)/(F^*_X(\E)\cap E_1))\leq d-2<\deg(\Ls).$$ Hence the adjoint homomorphism $F^*_X(\E)\rightarrow\Ls$ is not surjective.

III. If $F^*_X(\E)\rightarrow\Ls$ is not surjective, then $\mu_{\mathrm{min}}(F^*_X(\E))\leq d-2$. Then we must have $\HNP(F^*_X(\E))=\Ps_4(d)$ by Lemma \ref{Shatz77}(1) and the classification of Harder-Narasimhan polygons of Frobenius pull backs of Frobenius destabilized stable vector bundles in the case $(p,g,r,d)=(3,2,3,d)$.

In this case, it is easy to see that the Harder-Narasimhan filtration $F^*_X(\E)$ is $$0\subset F^*_X(\E)\cap E_2\subset F^*_X(\E)\cap E_1\subset F^*_X(\E).$$

(2). I. If $\deg(F^*_X(\E)\cap E_2)=d+1$, then the adjoint homomorphism $F^*_X(\E)\rightarrow\Ls$ is surjective by (1) and $$\mu(F^*_X(\E)\cap E_2)>\mu((F^*_X(\E)\cap E_1)/(F^*_X(\E)\cap E_2))>\mu(F^*_X(\E)/(F^*_X(\E)\cap E_1)).$$ Hence,
$\HNP(F^*_X(\E))=\Ps_3(d)$ and the Harder-Narasimhan filtration of $F^*_X(\E)$ is $$0\subset F^*_X(\E)\cap E_2\subset F^*_X(\E)\cap E_1\subset F^*_X(\E).$$

II. If $\HNP(F^*_X(\E))=\Ps_3(d)$, there exists unique maximal destabilizing sub-line bundle $E'\subset F^*_X(\E)\cap E_1$ with $\deg(E')=d+1$. Suppose that $E'\nsubseteq F^*_X(\E)\cap E_2$, then $$E'\hookrightarrow F^*_X(\E)\cap E_1\hookrightarrow E_1\twoheadrightarrow E_1/E_2$$ is not trivial. This implies $E'\cong E_1/E_2$ since $E'$ and $E_1/E_2$ are line bundles with same degree. Then $E_1=E'\oplus E''$ for some line bundle $E''$ of degree $d+3$. This induces a contradiction by Corollary \ref{Cor:NonSplitting}. Hence $E'\subseteq F^*_X(\E)\cap E_2$. In fact $E'=F^*_X(\E)\cap E_2$, so $\deg(F^*_X(\E)\cap E_2)=d+1$.

(3). By the proof of (1) and (2), we can conclude that $\HNP(F^*_X(\E))=\Ps_2(d)$ if and only if $\deg(F^*_X(\E)\cap E_2)=d$. In this case, $[\E\hookrightarrow{F_X}_*(\Ls)]\in\Quot^\sharp_X(3,d,\Ls)$ and the Harder-Narasimhan filtration $F^*_X(\E)$ is $$0\subset F^*_X(\E)\cap E_1\subset F^*_X(\E).$$
\end{proof}

\begin{Lemma}[A. Grothendieck, M. Raynaud]\label{Lem:Grothendieck}
Let $k$ be an algebraically closed field of characteristic $p>2$, $X$ a smooth projective curve of genus $g\geq 2$ over $k$. Let $B^1_X$ be the locally free sheaf of locally exact differential forms on $X$ defined by the exact sequence of locally free sheaves
$$0\rightarrow\mathscr{O}_X\rightarrow{F_X}_*(\mathscr{O}_X)\rightarrow B^1_X\rightarrow 0.$$
Then the Harder-Narasimhan filtration of $F^*_X(B^1_X)$ is $$0=V_p\subset V_{p-1}\subset\cdots\subset V_{l+1}\subset V_l\subset\cdots\subset V_1=F^*_X(B^1_X)$$
such that $V_i/V_{i+1}\cong\mathrm{\Omega}^{\otimes i}_X$ for any $1\leq i\leq p-1$, and $p|(g-1)$ if and only if $$F^*_X(B^1_X)\ncong\mathrm{\Omega}^{\otimes p-1}_X\oplus\mathrm{\Omega}^{\otimes p-2}_X\oplus\cdots\oplus\mathrm{\Omega}^1_X.$$
In particular, in the case $p=3$ and $g=2$, we have $$F^*_X(B^1_X)\ncong\mathrm{\Omega}^{\otimes 2}_X\oplus\mathrm{\Omega}^1_X.$$
\end{Lemma}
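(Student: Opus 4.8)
The plan is to reduce the whole statement to the $\E=\Ox_X$ case of the canonical filtration of Lemma \ref{Lem:Sun}, and then to the vanishing of a single extension class living in $H^1(X,\Omg^1_X)\cong k$. First I would pull back the defining sequence $0\to\Ox_X\to{F_X}_*(\Ox_X)\to B^1_X\to0$ along the flat morphism $F_X$. Since $F^*_X(\Ox_X)=\Ox_X$ and the adjunction zig-zag identity shows that the counit $F^*_X{F_X}_*(\Ox_X)\twoheadrightarrow\Ox_X$ splits off the pulled-back unit, this identifies $F^*_X(B^1_X)$ with $V_1=\ker(F^*_X{F_X}_*(\Ox_X)\twoheadrightarrow\Ox_X)$, the first step of the canonical filtration. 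Applying Lemma \ref{Lem:Sun} with $\E=\Ox_X$ (which is stable, and $g\geq 2$), the canonical filtration of $F^*_X{F_X}_*(\Ox_X)$ coincides with its Harder--Narasimhan filtration; restricting to $V_1$ gives exactly $0=V_p\subset\cdots\subset V_1=F^*_X(B^1_X)$ with $V_i/V_{i+1}\cong\Omg^{\otimes i}_X$ for $1\leq i\leq p-1$, which is the filtration assertion.

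Next I would reduce the splitting question to consecutive extension classes. By Serre duality, $\mathrm{Ext}^1(\Omg^{\otimes i}_X,\Omg^{\otimes j}_X)\cong H^1(X,\Omg^{\otimes(j-i)}_X)=0$ whenever $j-i\geq 2$, because $\Omg^{\otimes(1-(j-i))}_X$ has negative degree for $g\geq 2$. Hence an induction that splits off one graded piece at a time shows that $F^*_X(B^1_X)\cong\bigoplus_{i=1}^{p-1}\Omg^{\otimes i}_X$ if and only if every consecutive class $e_i\in\mathrm{Ext}^1(\Omg^{\otimes i}_X,\Omg^{\otimes(i+1)}_X)\cong H^1(X,\Omg^1_X)\cong k$ vanishes. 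Moreover the canonical connection $\nabla_{\mathrm{can}}$, through the isomorphisms $V_i/V_{i+1}\xrightarrow{\sim}(V_{i-1}/V_i)\otimes\Omg^1_X$ of Lemma \ref{Lem:Sun}(2), links these one-dimensional classes up to fixed nonzero scalars, so they all vanish simultaneously and the whole question is governed by a single scalar $e\in k$. (For $p=3$ there is only the class $e_1$ attached to $0\to\Omg^{\otimes 2}_X\to F^*_X(B^1_X)\to\Omg^1_X\to 0$, so no linking is needed and this case is clean.)

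Finally I would compute $e$, which I expect to be the main obstacle. By a \v{C}ech/residue computation of the extension $0\to\Omg^{\otimes 2}_X\to V_1/V_3\to\Omg^1_X\to 0$ --- or equivalently by extracting the Atiyah class of the jet structure on ${F_X}_*(\Ox_X)$ --- one finds that $e$ is a nonzero universal multiple of $(g-1)$ (equivalently of $\deg\Omg^1_X=2g-2$) inside $H^1(X,\Omg^1_X)\cong k$. Since $p$ is odd, $e=0$ exactly when $p\mid(g-1)$. Thus $F^*_X(B^1_X)\cong\bigoplus_{i=1}^{p-1}\Omg^{\otimes i}_X$ precisely when $p\mid(g-1)$; equivalently $F^*_X(B^1_X)\ncong\bigoplus_{i=1}^{p-1}\Omg^{\otimes i}_X$ exactly in the complementary arithmetic regime, which is the content asserted. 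In particular, since $3\nmid(2-1)$, the obstruction $e$ is nonzero and we obtain $F^*_X(B^1_X)\ncong\Omg^{\otimes 2}_X\oplus\Omg^1_X$ when $(p,g)=(3,2)$, as required. The crux of the whole argument is the residue computation pinning $e$ and identifying it as a nonzero multiple of $g-1$; the rest is formal once the identification $F^*_X(B^1_X)\cong V_1$ and the vanishing of the higher $\mathrm{Ext}$ groups are in place.
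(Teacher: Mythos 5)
The paper never proves this lemma: it is quoted from an unpublished note of Grothendieck, communicated to the author by Raynaud (see the acknowledgements), so there is no in-paper argument to compare against and your proposal must stand on its own. Its architecture is sound. The triangle identity does split off the pulled-back unit, identifying $F^*_X(B^1_X)\cong V_1=\ker\bigl(F^*_X{F_X}_*(\Ox_X)\twoheadrightarrow\Ox_X\bigr)$; Lemma \ref{Lem:Sun}(3) with $\E=\Ox_X$ (or simply uniqueness of filtrations with semistable quotients of strictly decreasing slopes) then gives the stated Harder--Narasimhan filtration; and the Serre-duality vanishing $\mathrm{Ext}^1(\Omg^{\otimes i}_X,\Omg^{\otimes j}_X)\cong H^1(X,\Omg^{\otimes(j-i)}_X)=0$ for $j-i\geq 2$ correctly reduces complete splitting to the vanishing of the consecutive classes $e_i\in\mathrm{Ext}^1(\Omg^{\otimes i}_X,\Omg^{\otimes(i+1)}_X)\cong H^1(X,\Omg^1_X)\cong k$. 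The value you assert for the obstruction is also correct.

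Nevertheless there are genuine gaps. The central one: the claim that $e_i$ is a nonzero universal multiple of $i(g-1)$ is announced, not proved, and the entire arithmetic content of the lemma (the appearance of $p\mid(g-1)$) lives precisely there, so asserting the value of $e$ is essentially citing the statement to be proved. The computation should be carried out: in local frames $w_\alpha^j$, $w_\alpha=t_\alpha\otimes 1-1\otimes t_\alpha$, one has $w_\beta^i=(f')^i w_\alpha^i+\tfrac{i}{2}(f')^{i-1}f''\,w_\alpha^{i+1}+\cdots$ with $f=f_{\beta\alpha}$ (here $p>2$ is used), so the class of $0\to\Omg^{\otimes(i+1)}_X\to V_i/V_{i+2}\to\Omg^{\otimes i}_X\to 0$ is represented by the cocycle $\tfrac{i}{2}\,d\log f'_{\beta\alpha}$, i.e.\ equals $\tfrac{i}{2}c_1(\Omg^1_X)=i(g-1)$ times the canonical generator of $H^1(X,\Omg^1_X)$. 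Second, your "linking" of the classes by $\nabla_{\mathrm{can}}$ is unjustified as stated: $\nabla_{\mathrm{can}}$ is $\Ox_X$-linear only modulo one step of the filtration (the defect $df\otimes s$ survives in $(V_{i-1}/V_{i+1})\otimes\Omg^1_X$), so it does not induce morphisms of the two-step extensions; this step should be discarded in favor of the Čech computation above, which handles all $i$ simultaneously and shows directly that the $e_i$, $1\leq i\leq p-2$, vanish together exactly when $p\mid(g-1)$. Finally, be explicit about what you prove: $F^*_X(B^1_X)\cong\Omg^{\otimes p-1}_X\oplus\cdots\oplus\Omg^1_X$ if and only if $p\mid(g-1)$. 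This is the negation of the lemma's literal "if and only if" (which pairs $p\mid(g-1)$ with $\ncong$), but it is the only reading consistent with the lemma's own "in particular" for $(p,g)=(3,2)$; you should flag this misprint in the statement rather than gloss over it with "which is the content asserted".
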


By Lemma \ref{Lem:Grothendieck}, it is easily to deduce the following corollary.

\begin{Corollary}\cite[Corollary 4.4]{Li18}\label{Cor:NonSplitting}
Let $k$ be an algebraically closed field of characteristic $p>2$, $X$ a smooth projective curve of genus $g\geq 2$ over $k$, and $\Ls$ a line bundle on $X$. Then the Harder-Narasimhan filtration of $F^*_X{F_X}_*(\Ls)$ is $$0=V_p\subset V_{p-1}\subset\cdots\subset V_{l+1}\subset V_l\subset\cdots\subset V_1\subset V_0=F^*_X{F_X}_*(\Ls)$$
such that $V_i/V_{i+1}\cong\mathrm{\Omega}^{\otimes i}_X\otimes\Ls$ for any $0\leq i\leq p-1$, and $p|(g-1)$ if and only if $$F^*_X{F_X}_*(\Ls)\ncong(\mathrm{\Omega}^{\otimes p-1}_X\otimes\Ls)\oplus(\mathrm{\Omega}^{\otimes p-2}_X\otimes\Ls)\oplus\cdots\oplus(\mathrm{\Omega}^1_X\otimes\Ls)\oplus\Ls.$$
In particular, in the case $p=3$ and $g=2$, we have $$F^*_X{F_X}_*(\Ls)\ncong(\mathrm{\Omega}^{\otimes 2}_X\otimes\Ls)\oplus(\mathrm{\Omega}^1_X\otimes\Ls)\oplus\Ls.$$
\end{Corollary}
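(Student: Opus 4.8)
The plan is to separate the statement into two essentially independent assertions: the shape of the Harder--Narasimhan filtration together with its graded pieces, and the splitting criterion governed by the divisibility $p\mid(g-1)$. The first follows directly from Lemma \ref{Lem:Sun}, while the second is where Lemma \ref{Lem:Grothendieck} does the real work.

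For the filtration, I would apply Lemma \ref{Lem:Sun} to the line bundle $\Ls$, which is automatically (semi)stable since it has rank one. By parts $(2)$ and $(3)$ of that lemma the canonical filtration $0=V_p\subset V_{p-1}\subset\cdots\subset V_1\subset V_0=F^*_X{F_X}_*(\Ls)$ coincides with the Harder--Narasimhan filtration, and the iterated canonical connection induces isomorphisms $V_l/V_{l+1}\xrightarrow{\sim}\Ls\otimes_{\Ox_X}\Omg^{\otimes l}_X$ for $0\le l\le p-1$. Since $\deg(\Omg^{\otimes i}_X\otimes\Ls)=\deg(\Ls)+i(2g-2)$ is strictly increasing in $i$ (as $g\ge 2$), the graded pieces are line bundles of strictly increasing slope; hence by Krull--Schmidt any decomposition of $F^*_X{F_X}_*(\Ls)$ into a direct sum of line bundles must coincide with $\bigoplus_{i=0}^{p-1}\Omg^{\otimes i}_X\otimes\Ls$. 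In particular the bundle is isomorphic to the direct sum of its graded pieces if and only if it decomposes as a direct sum of line bundles at all.

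For the splitting criterion, the natural first attempt is to reduce to the case $\Ls=\Ox_X$, where $F^*_X{F_X}_*(\Ox_X)$ sits in the pulled-back defining sequence $0\to\Ox_X\to F^*_X{F_X}_*(\Ox_X)\to F^*_X(B^1_X)\to 0$ and Lemma \ref{Lem:Grothendieck} applies verbatim, and then to twist by $\Ls$. The obstacle, which is the crux of the argument, is that for the absolute Frobenius $F^*_X(\Ls)\cong\Ls^{\otimes p}$, so the projection formula only yields $F^*_X{F_X}_*(\Ls)\cong\Ls\otimes F^*_X{F_X}_*(\Ox_X)$ when $\Ls$ is a $p$-th power; for general $\Ls$ the two bundles genuinely differ, as one already sees on the level of degrees. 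I would therefore argue that the obstruction to splitting is nevertheless independent of $\Ls$: the relevant extension classes lie in $\mathrm{Ext}^1(\Omg^{\otimes j}_X\otimes\Ls,\Omg^{\otimes i}_X\otimes\Ls)\cong H^1(X,\Omg^{\otimes(i-j)}_X)$, in which the twist by $\Ls$ cancels, and the second fundamental forms of $\nabla_{\mathrm{can}}$ are isomorphisms by Lemma \ref{Lem:Sun}$(2)$ exactly as in the case $\Ls=\Ox_X$. This rigidity should identify the obstruction to splitting $F^*_X{F_X}_*(\Ls)$ with that for $F^*_X{F_X}_*(\Ox_X)$, equivalently for $F^*_X(B^1_X)$, which by Lemma \ref{Lem:Grothendieck} vanishes precisely when $p\nmid(g-1)$; specializing to $(p,g)=(3,2)$ then gives the final non-splitting assertion. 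The hard part is making this $\Ls$-independence of the obstruction precise \emph{without} an actual isomorphism $F^*_X{F_X}_*(\Ls)\cong\Ls\otimes F^*_X{F_X}_*(\Ox_X)$, i.e.\ transporting the Grothendieck--Raynaud computation across the twist using only the naturality of $\nabla_{\mathrm{can}}$ and the cancellation of $\Ls$ in the Ext groups.
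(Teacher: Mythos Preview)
The paper itself offers no proof beyond the sentence ``By Lemma~\ref{Lem:Grothendieck}, it is easily to deduce the following corollary'' and the citation to \cite[Corollary~4.4]{Li18}, so there is no in-text argument to compare against; the detailed work presumably lives in the cited reference. Your treatment of the Harder--Narasimhan filtration via Lemma~\ref{Lem:Sun} is correct and is exactly how that part goes.

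For the splitting criterion you have correctly isolated a subtlety the paper glosses over: the projection formula gives ${F_X}_*(\Ls)\cong\mathcal{M}\otimes{F_X}_*(\Ox_X)$ only when $\Ls\cong F_X^*\mathcal{M}\cong\mathcal{M}^{\otimes p}$, so one cannot simply twist Lemma~\ref{Lem:Grothendieck} by $\Ls$. Your proposed workaround---observing that the $\mathrm{Ext}^1$ groups housing the obstruction are independent of $\Ls$---is the right first move, but it does not finish the argument: two filtered bundles with isomorphic graded pieces and invertible second fundamental forms can still have different extension classes in $H^1(X,\Omega^1_X)\cong k$, so knowing the classes live in the same one-dimensional space is not yet knowing they are simultaneously zero or nonzero. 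You acknowledge this gap honestly. To close it one can, for instance, use that $\mathrm{Pic}^0(X)$ is $p$-divisible over the algebraically closed field $k$ to reduce via the projection formula to finitely many degrees, and then compute the extension class of $0\to V_{p-1}\to V_{p-2}\to\Omega_X^{\otimes(p-2)}\otimes\Ls\to 0$ directly in local coordinates (in the spirit of the $k[[t]]\otimes_{k[[t^3]]}k[[t]]$ computations in Proposition~\ref{DimofQuot}); alternatively one traces through the Cartier-operator description of $B^1_X$ to see that the relevant class in $H^1(X,\Omega^1_X)$ is intrinsic to $X$ and does not see $\Ls$. Either way, this is where the genuine content sits, and it is almost certainly what \cite{Li18} supplies.
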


\begin{Proposition}\label{DimofQuot}
Let $k$ be an algebraically closed field of characteristic $3$, and $X$ a smooth projective curve of genus $2$ over $k$. Then $\Quot_X(3,d,\Pic^{(d-1)}(X),\Ps^+_i(d))$ are smooth irreducible projective varieties for $2\leq i\leq 4$, and
$$\dim\Quot_X(3,d,\Pic^{(d-1)}(X),\Ps_i(d))=\dim\Quot_X(3,d,\Pic^{(d-1)}(X),\Ps^+_i(d))=
\begin{cases}
5, i=2\\
4, i=3\\
3, i=4\\
\end{cases}$$
\end{Proposition}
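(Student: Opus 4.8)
The plan is to realise the whole Quot scheme as a projective bundle and then cut out the strata by incidence conditions against the relative canonical filtration.

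First I would record that for a fixed $\Ls$ of degree $d-1$ one has $\deg({F_X}_*(\Ls))=d+1$ (as in the proof of Proposition \ref{Prop:Injection}), so any $[\E\hookrightarrow{F_X}_*(\Ls)]\in\Quot_X(3,d,\Ls)$ has torsion quotient ${F_X}_*(\Ls)/\E$ of length $1$. Hence $\Quot_X(3,d,\Ls)$ is exactly the scheme of length-$1$ quotients of the rank-$3$ bundle ${F_X}_*(\Ls)$, i.e. $\Quot_X(3,d,\Ls)\cong\PP({F_X}_*(\Ls))$, a $\PP^2$-bundle over $X$. To globalise over $\Pic^{(d-1)}(X)$, let $\mathcal P$ be a Poincar\'e bundle on $\Pic^{(d-1)}(X)\times X$ and set $\mathcal V:=(\mathrm{id}\times F_X)_*\mathcal P$; since $F_X$ is finite flat, base change gives $\mathcal V|_{\{\Ls\}\times X}\cong{F_X}_*(\Ls)$, so $\mathcal V$ is a rank-$3$ bundle and $Y:=\Quot_X(3,d,\Pic^{(d-1)}(X))\cong\PP(\mathcal V)$ is a $\PP^2$-bundle over the smooth irreducible $3$-fold $\Pic^{(d-1)}(X)\times X$. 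Thus $Y$ is smooth, irreducible and projective of dimension $5$. Because Proposition \ref{Prop:Intersection} forces every fibre to have $\HNP\in\{\Ps_2(d),\Ps_3(d),\Ps_4(d)\}$ and these satisfy $\Ps_4(d)\succcurlyeq\Ps_3(d)\succcurlyeq\Ps_2(d)$, the stratum $\Quot_X(3,d,\Pic^{(d-1)}(X),\Ps^+_2(d))$ is all of $Y$; this settles $i=2$.

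Next I would set up the relative canonical filtration. The formation of $F^*_X{F_X}_*$ and of the canonical connection commute with base change along $\Pic^{(d-1)}(X)$, so Definition \ref{CanFil}, Lemma \ref{Lem:Sun} and Corollary \ref{Cor:NonSplitting} produce a filtration by subbundles $0\subset E_2\subset E_1\subset(\mathrm{id}\times F_X)^*\mathcal V$ restricting fibrewise to the canonical filtration of $F^*_X{F_X}_*(\Ls)$. By Proposition \ref{Prop:Intersection} the stratum containing $[\E\hookrightarrow{F_X}_*(\Ls)]$ is determined by $\deg(F^*_X(\E)\cap E_2)$, equivalently by the length of the image of $E_2$ in the length-$3$ torsion sheaf $F^*_X({F_X}_*(\Ls)/\E)\cong\Ox_X/\Ox_X(-3x)$. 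The key computation is local: trivialising $\Ls$ near $x$ and writing $s=t^3$, ${F_X}_*(\Ls)$ has basis $e,te,t^2e$ over $\Ox_X^{(p)}$, and $F^*_X{F_X}_*(\Ls)$ has the induced basis $\epsilon_i=1\otimes t^ie$; one finds (in characteristic $3$) that $E_2$ is generated by $\zeta=t^2\epsilon_0+t\epsilon_1+\epsilon_2$, while the universal quotient sends $\epsilon_i\mapsto a_i$ for $(a_0:a_1:a_2)$ the point of the $\PP^2$-fibre. Hence the image of $\zeta$ is $a_2+a_1t+a_0t^2$, whose order of vanishing shows that the image of $E_2$ has length $3,2,1$ according as $a_2\neq0$, $a_2=0\neq a_1$, or $a_2=a_1=0$. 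Comparing with Proposition \ref{Prop:Intersection} identifies these with $\Ps_2(d),\Ps_3(d),\Ps_4(d)$ respectively.

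It then remains to globalise these incidence conditions. The computation shows that $\Ps^+_3(d)$ is the locus where the quotient $q$ annihilates the line spanned by (the Frobenius twist of) the fibre of $E_2$, and $\Ps^+_4(d)$ the locus where $q$ annihilates the $2$-plane coming from $E_1$. Accordingly $\Quot_X(3,d,\Pic^{(d-1)}(X),\Ps^+_3(d))$ is the $\PP^1$-subbundle of $\PP(\mathcal V)$ cut out by a single linear condition on each fibre, hence smooth, irreducible and projective of dimension $4$; and $\Quot_X(3,d,\Pic^{(d-1)}(X),\Ps^+_4(d))$ is the section of $\PP(\mathcal V)$ over $\Pic^{(d-1)}(X)\times X$ defined by the two linear conditions, hence smooth, irreducible and projective of dimension $3$. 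Inside each, the exact-polygon stratum $\Ps_i(d)$ is the open complement ($a_2\neq0$, resp. $a_1\neq0$), which is nonempty by the local description, so $\overline{\Quot_X(3,d,\Pic^{(d-1)}(X),\Ps_i(d))}=\Quot_X(3,d,\Pic^{(d-1)}(X),\Ps^+_i(d))$ and the two dimensions coincide.

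The step I expect to be the main obstacle is the globalisation in the third paragraph: turning the fibrewise incidence conditions into honest subbundle conditions on $\PP(\mathcal V)$. This needs the relative canonical filtration to be a filtration by subbundles in the $\Pic$-family, and it requires bookkeeping of the Frobenius twist identifying the fibre $(F^*_X\mathcal V)(x)$ with $\mathcal V(x)^{(1)}$, so that a line subbundle of $(\mathrm{id}\times F_X)^*\mathcal V$ descends to a well-defined (twisted) linear condition on $\PP(\mathcal V)$; since this twist is an isomorphism of the relevant projective bundles it does not affect dimension, smoothness or irreducibility. A secondary point is verifying that the open strata $\Ps_i(d)$ are dense, which is exactly the nonemptiness furnished by the explicit fibre computation.
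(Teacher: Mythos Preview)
Your proposal is correct and follows essentially the same route as the paper: realise the Quot scheme as a $\PP^2$-bundle over $X\times\Pic^{(d-1)}(X)$, then carry out a local computation of the intersection with the canonical filtration $E_2$ to identify the three strata with $\PP^2$, $\PP^1$, and a point in each fibre. The paper phrases the local computation dually (working with the submodule $V_\E\subset k[[t]]$ and asking whether $t,t^2\in V_\E$, rather than with your quotient coordinates $(a_0:a_1:a_2)$) and handles the globalisation implicitly via the support morphism $\Pi:\Quot_X(3,d,\Pic^{(d-1)}(X))\to X\times\Pic^{(d-1)}(X)$ rather than a Poincar\'e bundle, but the content is identical.
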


\begin{proof}
By \cite{Grothendieck95}, there is a morphism
\begin{eqnarray*}
\Pi:\Quot_X(3,d,\Pic^{(d-1)}(X))&\rightarrow&X\times\Pic^{(d-1)}(X)\\
{[\E\hookrightarrow{F_X}_*(\Ls)]}&\mapsto&(\Supp({F_X}_*(\Ls)/\E),\Ls).
\end{eqnarray*}
For any point $x\in X$ and any $[\Ls]\in\Pic^{(d-1)}(X)$, the fiber of $\Pi$ over $(x,[\Ls])$ denoted by $\Quot_X(3,d,\Ls_x)$. Then there is an one to one correspondence between the set of closed points $[\E\hookrightarrow{F_X}_*(\Ls)]$ of $\Quot_X(3,d,\Ls_x)$ and the set of $\Ox_x$-submodules $V\subset{F_X}_*(\Ls)_x$ such that ${F_X}_*(\Ls)_x/V\cong k$, the latter has a natural structure of algebraic variety which is isomorphic to projective space $\PP^2_k$. Hence $\Quot_X(3,d,\Pic^{(d-1)}(X))$ is a smooth irreducible projective variety of dimension $5$. Without loss of generality, we can assume that $\Ox_x\cong k[[t^3]]$, then ${F_X}_*(\Ls)_x\cong k[[t]]$ endows with $k[[t^3]]$-module structure induced by injection $k[[t^3]]\hookrightarrow k[[t]]$ and $$F^*_X({F_X}_*(\Ls))_x\cong k[[t]]\otimes_{k[[t^3]]}k[[t]].$$

Suppose that the $\Ox_x$-submodule $\E_x$ of ${F_X}_*(\Ls)_x$ corresponds to the $k[[t^3]]$-submodule $V_{\E}$ of $k[[t]]$, then the $\Ox_x$-submodule $F^*_X(\E)_x$ of $F^*_X({F_X}_*(\Ls))_x$ corresponds to the $k[[t]]$-submodule $V_{\E}\otimes_{k[[t^3]]}k[[t]]$ of $k[[t]]\otimes_{k[[t^3]]}k[[t]]$.

Consider the decomposition of $k[[t]]=k[[t^3]]\oplus k[[t^3]]\cdot t\oplus k[[t^3]]\cdot t^2$ as $k[[t^3]]$-module. Then the $k[[t^3]]$-submodule $V_{\E}\subset k[[t]]$ with $k[[t]]/V_{\E}\cong k$ implies that $$k[[t^3]]\cdot t^3\oplus k[[t^3]]\cdot t^4\oplus k[[t^3]]\cdot t^5 \subset V_{\E}.$$

Now, we investigate the intersection of $F^*_X(\E)$ with the canonical filtration $$0\subset E_2\subset E_1\subset F^*_X({F_X}_*(\Ls)).$$
Locally, the stalk ${E_1}_x$ has a basis $\{t\otimes 1-1\otimes t, (t\otimes 1-1\otimes t)^2\}$ and ${E_2}_x$ has a basis $\{(t\otimes 1-1\otimes t)^2\}$ as $k[[t]]$-submodules of $F^*_X({F_X}_*(\Ls))_x\cong k[[t]]\otimes_{k[[t^3]]}k[[t]]$ by \cite[Lemma 3.2]{Sun08i}. Let $[\E\hookrightarrow{F_X}_*(\Ls)]\in\Quot_X(3,d,\Ls_x)$, we claim that
\begin{itemize}
\item[$(a)$] $(t\otimes 1-1\otimes t)^2\notin V_{\E}\otimes_{k[[t^3]]}k[[t]]$
\item[$(b)$] $(t\otimes 1-1\otimes t)^2t\in V_{\E}\otimes_{k[[t^3]]}k[[t]]$ if and only if $\{t,t^2\}\subset V_{\E}$.
\item[$(c)$] $(t\otimes 1-1\otimes t)^2t^2\in V_{\E}\otimes_{k[[t^3]]}k[[t]]$ if and only if $t^2\in V_{\E}$.
\item[$(d)$] $(t\otimes 1-1\otimes t)^2t^3\in V_{\E}\otimes_{k[[t^3]]}k[[t]]$.
\end{itemize}

Suppose that $(t\otimes 1-1\otimes t)^2\in V_{\E}\otimes_{k[[t^3]]}k[[t]]$, then we have $$V_{\E}\otimes_{k[[t^3]]}k[[t]]=(t\otimes 1-1\otimes t)^2k[[t]].$$ It follows that $F^*_X(\E)\cap E_2=E_2$ , so $\deg(F^*_X(\E)\cap E_2)=d+3$. This contradicts to Proposition \ref{Prop:Intersection}.

Since
\begin{eqnarray*}
(t\otimes 1-1\otimes t)^2t&=&t^2\otimes t-2t\otimes t^2+1\otimes t^3\\
&=&t^2\otimes t-2t\otimes t^2+t^3\otimes 1
\end{eqnarray*}
and $\{t^3\}\subset V_{\E}$ by $k[[t^3]]\cdot t^3\oplus k[[t^3]]\cdot t^4\oplus k[[t^3]]\cdot t^5 \subset V_{\E}$. Hence, $$(t\otimes 1-1\otimes t)^2t\in V_{\E}\otimes_{k[[t^3]]}k[[t]]~\mathrm{iff}~t^2\otimes t-2t\otimes t^2\in V_{\E}\otimes_{k[[t^3]]}k[[t]],$$ that is equivalent to $\{t,t^2\}\subset V_{\E}$.

Since
\begin{eqnarray*}
(t\otimes 1-1\otimes t)^2t^2&=&t^2\otimes t^2-2t\otimes t^3+1\otimes t^4\\
&=&t^2\otimes t^2-2t^4\otimes 1+t^3\otimes t
\end{eqnarray*}
and $\{t^3,t^4\}\subset V_{\E}$ by $k[[t^3]]\cdot t^3\oplus k[[t^3]]\cdot t^4\oplus k[[t^3]]\cdot t^5 \subset V_{\E}$. Hence, $$(t\otimes 1-1\otimes t)^2t^2\in V_{\E}\otimes_{k[[t^3]]}k[[t]]~\mathrm{iff}~t^2\otimes t^2\in V_{\E}\otimes_{k[[t^3]]}k[[t]],$$ that is equivalent to $x^2\in V_{\E}$..

Since
\begin{eqnarray*}
(t\otimes 1-1\otimes t)^2t^3&=&t^2\otimes t^3-2t\otimes t^4+1\otimes t^5\\
&=&t^5\otimes 1-2t^4\otimes t+t^3\otimes t^2
\end{eqnarray*}
and $\{t^3,t^4,t^5\}\subset V_{\E}$ by $k[[t^3]]\cdot t^3\oplus k[[t^3]]\cdot t^4\oplus k[[t^3]]\cdot t^5 \subset V_{\E}$. It follows that $$(t\otimes 1-1\otimes t)^2t^3\in V_{\E}\otimes_{k[[t^3]]}k[[t]].$$

In summary, by above claim, we have $$1\leq\dim {E_2}_x/((V_{\E}\otimes_{k[[t^3]]}k[[t]])\cap {E_2}_x)\leq 3.$$ In detail,
$$\dim {E_2}_x/((V_{\E}\otimes_{k[[t^3]]}k[[t]])\cap {E_2}_x)=
\begin{cases}
1& \text{if ond only if}~\{t,t^2\}\subset V_{\E}\\
2& \text{if ond only if}~t\notin V_{\E}~\text{and}~t^2\in V_{\E}\\
3& \text{if ond only if}~t^2\notin V_{\E}
\end{cases}$$

Consider the exact sequence of $\Ox_X$-modules
$$0\rightarrow F^*_X(\E)\cap E_2\rightarrow E_2\rightarrow E_2/(F^*_X(\E)\cap E_2)\rightarrow0.$$
Notice that $E_2/(F^*_X(\E)\cap E_2)={E_2}_x/((V_{\E}\otimes_{k[[t^3]]}k[[t]])\cap{E_2}_x)$. Therefore, by Proposition \ref{Prop:Intersection}, we have
\begin{eqnarray*}
\HNP(F^*_X(\E))=\Ps_2(d) &\Leftrightarrow& \deg(F^*_X(\E)\cap E_2)=d\\
&\Leftrightarrow& \deg({E_2}_x/((V_{\E}\otimes_{k[[t^3]]}k[[t]])\cap{E_2}_x))=3\\
&\Leftrightarrow& t^2\notin V_{\E}.
\end{eqnarray*}

\begin{eqnarray*}
\HNP(F^*_X(\E))=\Ps_3(d) &\Leftrightarrow& \deg(F^*_X(\E)\cap E_2)=d+1\\
&\Leftrightarrow& \deg({E_2}_x/((V_{\E}\otimes_{k[[t^3]]}k[[t]])\cap{E_2}_x))=2\\
&\Leftrightarrow& t\notin V_{\E}~\text{and}~t^2\in V_{\E}.
\end{eqnarray*}

\begin{eqnarray*}
\HNP(F^*_X(\E))=\Ps_4(d) &\Leftrightarrow& \deg(F^*_X(\E)\cap E_2)=d+2\\
&\Leftrightarrow& \deg({E_2}_x/((V_{\E}\otimes_{k[[t^3]]}k[[t]])\cap{E_2}_x))=1\\
&\Leftrightarrow& \{t,t^2\}\subset V_{\E}.
\end{eqnarray*}

For $i=2,3,4$, let $$\Quot_X(3,d,\Ls_x,\Ps^+_i(d)):=\{~[\E\hookrightarrow{F_X}_*(\Ls)]\in\Quot_X(3,d,\Ls_x)~|~\HNP(F^*_X(\E))\succcurlyeq\Ps_i(d)~\}$$
Then
\begin{eqnarray*}
\Quot_X(3,d,\Ls_x,\Ps^+_2(d))&\cong&\{V|~k[[t^3]]\text{-submodule}~V\subset k[[t]],k[[t]]/V\cong k\}\cong\PP^2_k\\
\Quot_X(3,d,\Ls_x,\Ps^+_3(d))&\cong&\{V|~k[[t^3]]\text{-submodule}~V\subset k[[t]],k[[t]]/V\cong k,t^2\in V_{\E}\}\cong\PP^1_k\\
\Quot_X(3,d,\Ls_x,\Ps^+_4(d))&\cong&\{V|~k[[t^3]]\text{-submodule}~V\subset k[[t]],k[[t]]/V\cong k,\{t,t^2\}\subset V_{\E}\}\cong\{p\}.
\end{eqnarray*}
So $\Quot_X(3,d,\Pic^{(d-1)}(X),\Ps^+_i(d))$ are smooth irreducible projective varieties for $2\leq i\leq 4$, and
$$\dim\Quot_X(3,d,\Pic^{(d-1)}(X),\Ps^+_i(d))=
\begin{cases}
5& i=2\\
4& i=3\\
3& i=4\\
\end{cases}$$

By Lemma \ref{Shatz77}(2), it is easy to see that $\Quot_X(3,d,\Pic^{(d-1)}(X),\Ps_i(d))$ is an open subvariety of $\Quot_X(3,d,\Pic^{(d-1)}(X),\Ps^+_i(d))$ for $2\leq i\leq 4$. Therefore, $$\overline{\Quot_X(3,d,\Pic^{(d-1)}(X),\Ps_i(d))}=\Quot_X(3,d,\Pic^{(d-1)}(X),\Ps^+_i(d)),$$ $$\dim\Quot_X(3,d,\Pic^{(d-1)}(X),\Ps_i(d))=\dim\Quot_X(3,d,\Pic^{(d-1)}(X),\Ps^+_i(d))$$ for $2\leq i\leq 4$.
\end{proof}

\section{Geometric Properties of Frobenius strata}\

We now study the geometric properties of strata in the Frobenius stratification of moduli space $\M^s_X(3,d)$, when $X$ is a smooth projective curve of genus $2$ over an algebraically closed field $k$ of characteristic $3$.

\begin{Proposition}\label{Prop:InjMorphism}
Let $k$ be an algebraically closed field of characteristic $3$, and $X$ a smooth projective curve of genus $2$ over $k$. Then the image of the morphism
\begin{eqnarray*}
\theta:\Quot_X(3,d,\Pic^{(d-1)}(X))&\rightarrow&\M^s_X(3,d)\\
{[\E\hookrightarrow{F_X}_*(\Ls)]}&\mapsto&[\E]
\end{eqnarray*}
is the subset
$$\{~[\E]\in\M^s_X(3,d)~|~\HNP(F^*_X(\E))\in\{\Ps_2(d),\Ps_3(d),\Ps_4(d)\}~\}.$$
Moreover, the restriction $\theta|_{\Quot^\sharp_X(3,d,\Pic^{(d-1)}(X))}$ is an injective morphism and the image of $\theta|_{\Quot^\sharp_X(3,d,\Pic^{(d-1)}(X))}$ is the subset
$$\{~[\E]\in\M^s_X(3,d)~|~\HNP(F^*_X(\E))\in\{\Ps_2(d),\Ps_3(d)\}~\}.$$
\end{Proposition}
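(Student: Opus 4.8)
The plan is to analyze the image and the fibres of $\theta$ Harder--Narasimhan type by type, using Proposition \ref{Prop:Intersection} to translate the Harder--Narasimhan polygon of $F^*_X(\E)$ into the behaviour of the adjoint map $F^*_X(\E)\rightarrow\Ls$, and using the adjunction $\mathrm{Hom}(F^*_X(\E),\Ls)\cong\mathrm{Hom}(\E,{F_X}_*(\Ls))$ together with Proposition \ref{Prop:Injection} to pass between quotients of $F^*_X(\E)$ and embeddings of $\E$. I will prove the three assertions (image of $\theta$; image of $\theta|_{\Quot^\sharp}$; injectivity of $\theta|_{\Quot^\sharp}$) separately.

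First I would settle the image of $\theta$. The inclusion of the image in the displayed subset is immediate from Proposition \ref{Prop:Intersection}, which gives $\HNP(F^*_X(\E))\in\{\Ps_2(d),\Ps_3(d),\Ps_4(d)\}$ for every $[\E\hookrightarrow{F_X}_*(\Ls)]\in\Quot_X(3,d,\Pic^{(d-1)}(X))$. For the reverse inclusion I would use the construction recorded after Proposition \ref{Prop:Subsheaf}: given $[\E]$ with $\HNP(F^*_X(\E))\in\{\Ps_2(d),\Ps_3(d),\Ps_4(d)\}$, the minimal destabilizing quotient of $F^*_X(\E)$ is a line bundle of degree $d-1$ (types $\Ps_2(d),\Ps_3(d)$) or of degree $d-2$ (type $\Ps_4(d)$); in the first two cases this quotient serves as the desired $\Ls$, while in the $\Ps_4(d)$ case I twist it by a point to obtain a line bundle $\Ls$ of degree $d-1$ together with a nontrivial, non-surjective map $F^*_X(\E)\rightarrow\Ls$. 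In each case Proposition \ref{Prop:Injection} turns the adjoint into an embedding $\E\hookrightarrow{F_X}_*(\Ls)$, so $[\E]$ lies in the image.

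Next the image of $\theta|_{\Quot^\sharp}$. The inclusion in $\{[\E]\mid\HNP(F^*_X(\E))\in\{\Ps_2(d),\Ps_3(d)\}\}$ is Proposition \ref{Prop:Intersection}(1), which identifies the $\Ps_4(d)$--locus with the locus where the adjoint homomorphism is not surjective, hence disjoint from $\Quot^\sharp_X(3,d,\Pic^{(d-1)}(X))$. Conversely, if $\HNP(F^*_X(\E))\in\{\Ps_2(d),\Ps_3(d)\}$ then $\mu_{\mathrm{min}}(F^*_X(\E))=d-1$ and the canonical minimal destabilizing quotient $q:F^*_X(\E)\twoheadrightarrow Q$ onto the line bundle $Q$ of degree $d-1$ is surjective by construction; its adjoint $\E\hookrightarrow{F_X}_*(Q)$ lies in $\Quot^\sharp$ and maps to $[\E]$ under $\theta$.

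The main obstacle is the injectivity of $\theta|_{\Quot^\sharp}$, for which I must show that both the line bundle $\Ls$ and the embedding $\E\hookrightarrow{F_X}_*(\Ls)$ are intrinsic to $\E$. Let $\psi:F^*_X(\E)\twoheadrightarrow\Ls$ be any surjection onto a line bundle of degree $d-1$, and let $\F_{\max}\subset F^*_X(\E)$ be the maximal destabilizing subsheaf, which in both types $\Ps_2(d),\Ps_3(d)$ has rank $2$, degree $2d+1$, and all Harder--Narasimhan slopes $>d-1$. I would first show $\mathrm{Hom}(\F_{\max},\Ls)=0$: the image of any nonzero map $\F_{\max}\rightarrow\Ls$ is simultaneously a quotient of $\F_{\max}$, hence of slope $\geq\mu_{\mathrm{min}}(\F_{\max})>d-1$, and a subsheaf of the line bundle $\Ls$, hence a line bundle of degree $\leq d-1$, a contradiction. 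Therefore $\F_{\max}\subseteq\ker\psi$, and comparing ranks and degrees forces $\ker\psi=\F_{\max}$; thus $\psi$ factors through the canonical quotient $q$ and induces an isomorphism $Q\cong\Ls$. Consequently, any point $[\E\hookrightarrow{F_X}_*(\Ls)]\in\Quot^\sharp$ with $\theta$--image $[\E]$ has $\Ls\cong Q$, and its embedding equals, up to the scalar automorphism of $Q$, the adjoint of $q$; since scalars do not alter the image subsheaf, the point of the Quot scheme is uniquely determined by $[\E]$, which yields injectivity.
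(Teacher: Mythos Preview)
Your overall strategy matches the paper's proof: use Proposition \ref{Prop:Intersection} for the inclusion of the image, produce an embedding via a degree-$(d-1)$ line bundle quotient (padding by an effective divisor in the $\Ps_4(d)$ case) for surjectivity, and argue injectivity on $\Quot^\sharp$ by identifying $\Ls$ with the last Harder--Narasimhan quotient of $F^*_X(\E)$. The paper phrases the injectivity step via the uniqueness of the Harder--Narasimhan filtration and a commutative diagram under adjunction, while you argue more explicitly that $\mathrm{Hom}(\F_{\max},\Ls)=0$; these are the same idea.

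There is, however, a slip in your injectivity argument. You write that ``the maximal destabilizing subsheaf $\F_{\max}\subset F^*_X(\E)$ \dots in both types $\Ps_2(d),\Ps_3(d)$ has rank $2$, degree $2d+1$''. This is false for type $\Ps_3(d)$: there the Harder--Narasimhan filtration is $0\subset\F_1\subset\F_2\subset F^*_X(\E)$ with $\rk(\F_1)=1$, $\deg(\F_1)=d+1$, so the maximal destabilizing subsheaf has rank $1$. With $\F_{\max}$ of rank $1$, your step ``comparing ranks and degrees forces $\ker\psi=\F_{\max}$'' fails, since $\ker\psi$ has rank $2$. The fix is straightforward: replace $\F_{\max}$ throughout by the penultimate step $\F_{m-1}$ of the Harder--Narasimhan filtration (equivalently, the kernel of the canonical projection to the minimal-slope quotient). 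In both types this is the rank-$2$, degree-$(2d+1)$ subsheaf, and all its Harder--Narasimhan slopes ($\tfrac{2d+1}{2}$ for $\Ps_2(d)$; $d+1$ and $d$ for $\Ps_3(d)$) exceed $d-1$, so your vanishing-of-$\mathrm{Hom}$ argument and the rank/degree comparison go through unchanged. With this correction the proof is complete and agrees with the paper's.
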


\begin{proof}
Let $[\E]\in\Quot_X(3,d,\Ls)$, then $\HNP(F^*_X(\E))\in\{\Ps_2(d),\Ps_3(d),\Ps_4(d)\}$ by Proposition \ref{Prop:Intersection}. It follows that the image of $\theta$ locus in the following subset $$\{~[\E]\in\M^s_X(3,d)~|~\HNP(F^*_X(\E))\in\{\Ps_2(d),\Ps_3(d),\Ps_4(d)\}~\}.$$

On the other hand, let $[\E]\in\M^s_X(3,d)$ such that $\HNP(F^*_X(\E))=\Ps_i(d)$ for some $2\leq i\leq 4$. Then $F^*_X(\E)$ has a quotient line bundle $\Ls'$ of $\deg(\Ls')\leq d-1$. Embedding $\Ls'$ into some line bundle $\Ls$ of $\deg(\Ls)=d-1$. So we get the non-trivial homomorphism $$F^*_X(\E)\twoheadrightarrow\Ls'\hookrightarrow\Ls.$$ Then the adjunction $\E\hookrightarrow{F_X}_*(\Ls)$ is an injection by Proposition \ref{Prop:Injection}. Hence, the image of $\theta$ is $$\{~[\E]\in\M^s_X(3,d)~|~\HNP(F^*_X(\E))\in\{\Ps_2(d),\Ps_3(d),\Ps_4(d)\}~\}.$$

Now, we will prove $\theta|_{\Quot^\sharp_X(3,d,\Pic^{(d-1)}(X))}$ is an injective morphism. Let $$e_i:=[\E_i\hookrightarrow{F_X}_*(\Ls_i)]\in\Quot^\sharp_X(3,d,\Pic^{(d-1)}(X)),$$ where $\Ls_i\in\Pic^{(d-1)}(X)$, $i=1,2$. Suppose that $\theta(e_1)=\theta(e_2)\in\M^s_X(3,d)$, i.e. $\E_1\cong\E_2$. Since $\HNP(F^*_X(\E))\in\{\Ps_2(d),\Ps_3(d)\}$, we have $\mu_{\mathrm{min}}(F^*_X(\E_i))=d-1$. So the surjection $F^*_X(\E_i)\rightarrow\Ls_i$ implies that $\Ls_i$ is the quotient line bundle of $F^*_X(\E_i)$ with minimal slope in the Harder-Narasimhan filtration of $F^*_X(\E_i)$. By the uniqueness of Harder Narasimhan filtration, there exists isomorphism $\psi:\Ls_1\rightarrow\Ls_2$ making the following diagram
$$\xymatrix{
  F^*_X(\E_1) \ar[r]\ar[d]_{\phi}^{\cong} & \Ls_1 \ar[r]\ar[d]_{\psi}^{\cong} & 0\\
  F^*_X(\E_2) \ar[r] & \Ls_2 \ar[r] & 0}$$
commutative, where the isomorphism $\phi$ is induced from an isomorphism $\E_1\stackrel{\cong}{\rightarrow}\E_2$. By adjunction, we have commutative diagram
$$\xymatrix{
  0 \ar[r] & \E_1 \ar[r]\ar[d]^{\cong} & {F_X}_*(\Ls_1) \ar[d]_{{F_X}_*(\psi)}^{\cong}\\
  0 \ar[r] & \E_2 \ar[r] & {F_X}_*(\Ls_2)}$$
where the horizontal homomorphisms is an isomorphism $${F_X}_*(\psi):{F_X}_*(\Ls_1)\stackrel{\cong}{\rightarrow}{F_X}_*(\Ls_2).$$ This implies $\E_1=\E_2$ as subsheaves of ${F_X}_*(\Ls)$, where $[\Ls]=[\Ls_1]=[\Ls_2]\in\Pic^{(d-1)}(X)$. Thus, $e_1$ and $e_1$ are the some point in the $\Quot^\sharp_X(3,d,\Pic^{(d-1)}(X))$. Hence the morphism $\theta|_{\Quot^\sharp_X(3,d,\Pic^{(d-1)}(X))}$ is injective.

Let $[\E]\in\M^s_X(3,d)$ and $\HNP(F^*_X(\E))\in\{\Ps_2(d),\Ps_3(d)\}$. Then $F^*_X(\E)$ has quotient line bundle $\Ls$ of $\deg(\Ls)=d-1$. Then by Proposition \ref{Prop:Injection}, the adjoint homomorphism $\E\hookrightarrow{F_X}_*(\Ls)$ is an injective homomorphism. Therefore $$e:=[\E\hookrightarrow{F_X}_*(\Ls)]\in\Quot^\sharp_X(3,d,\Pic^{(d-1)}(X))$$ and $\theta(e)=[\E]$. Hence the image of $\theta|_{\Quot^\sharp_X(3,d,\Pic^{(d-1)}(X))}$ is the subset
$$\{~[\E]\in\M^s_X(3,d)~|~\HNP(F^*_X(\E))\in\{\Ps_2(d),\Ps_3(d)\}~\}.$$
\end{proof}

\begin{Theorem}\label{Thm:FrobStra}
Let $k$ be an algebraically closed field of characteristic $3$, and $X$ a smooth projective curve of genus $2$ over $k$. Then $\overline{S_X(3,d,\Ps_i(d))}=S_X(3,d,\Ps^+_i(d))$, and $S_X(3,d,\Ps_i(d))$ $($resp. $S_X(3,d,\Ps^+_i(d))$$)$ are irreducible quasi-projective $($resp. projective$)$ varieties for $1\leq i\leq 4$, $$\dim S_X(3,d,\Ps_i(d))=\dim S_X(3,d,\Ps^+_i(d))=
\begin{cases}
5, i=1\\
5, i=2\\
4, i=3\\
2, i=4\\
\end{cases}$$
\end{Theorem}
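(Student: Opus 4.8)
The plan is to transport the geometric information already established on the Quot scheme side (Proposition \ref{DimofQuot}) to the moduli space through the morphism $\theta$ of Proposition \ref{Prop:InjMorphism}, and then to reach the stratum $\Ps_1(d)$, which is invisible to $\theta$, by a duality argument. Throughout I write $Q(\Ps):=\Quot_X(3,d,\Pic^{(d-1)}(X),\Ps)$ and $S(\Ps):=S_X(3,d,\Ps)$. First I would dispose of $i=2$ and $i=3$. By Proposition \ref{Prop:Intersection}(2),(3) every point of $Q(\Ps_i(d))$ has surjective adjoint homomorphism, so $Q(\Ps_i(d))=\Quot^\sharp_X(3,d,\Pic^{(d-1)}(X),\Ps_i(d))$ for $i=2,3$; since $\theta$ preserves the Harder--Narasimhan polygon and is injective on $\Quot^\sharp$ (Proposition \ref{Prop:InjMorphism}), the induced map $\theta:Q(\Ps_i(d))\to S(\Ps_i(d))$ is a bijection. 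An injective morphism from an irreducible variety has finite fibres, so its image is irreducible of the same dimension; invoking the irreducibility and the dimensions $5$ ($i=2$) and $4$ ($i=3$) from Proposition \ref{DimofQuot} settles these two strata.

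Next, for $i=4$ the map $\theta:Q(\Ps_4(d))\to S(\Ps_4(d))$ is surjective but no longer injective, so I must compute its fibres. Fix $[\E]\in S(\Ps_4(d))$; by Proposition \ref{Prop:Intersection}(1) the minimal destabilizing quotient $F^*_X(\E)\twoheadrightarrow\Ls'$ is a line bundle of degree $d-2$, canonical by uniqueness of the Harder--Narasimhan filtration. By adjunction any point of the fibre is a nontrivial homomorphism $F^*_X(\E)\to\Ls$ with $\deg\Ls=d-1$; since $\mu_{\mathrm{min}}(F^*_X(\E))=d-2$, its image is forced to be $\Ls'$, so it factors as $F^*_X(\E)\twoheadrightarrow\Ls'\hookrightarrow\Ls$. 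Thus the fibre is identified with the inclusions $\Ls'\hookrightarrow\Ls$ of a degree $d-2$ line bundle into a degree $d-1$ line bundle, up to scalar, i.e. with the effective divisors of degree $1$, i.e. with $X$ itself (Proposition \ref{Prop:Injection} guarantees each such datum yields a genuine point of $Q(\Ps_4(d))$). Hence every fibre is isomorphic to $X$ of dimension $1$, and the fibre dimension theorem applied to the irreducible $3$-dimensional $Q(\Ps_4(d))$ gives that $S(\Ps_4(d))$ is irreducible of dimension $2$.

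The stratum $i=1$ is the main obstacle: a bundle with $\HNP(F^*_X(\E))=\Ps_1(d)$ satisfies $\mu_{\mathrm{min}}(F^*_X(\E))=(2d-1)/2>d-1$, so it admits no quotient line bundle of degree $\leq d-1$ and therefore never embeds into ${F_X}_*(\Ls)$ with $\deg\Ls=d-1$; the entire $\theta$-machinery misses it. I would circumvent this by the dualization isomorphism $\M^s_X(3,d)\to\M^s_X(3,-d)$, $[\E]\mapsto[\E^\vee]$, using $F^*_X(\E^\vee)\cong(F^*_X(\E))^\vee$ together with the fact that dualization reflects the Harder--Narasimhan polygon. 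A direct computation of the reflected slopes shows this isomorphism carries $S_X(3,d,\Ps_1(d))$ onto $S_X(3,-d,\Ps_2(-d))$ (and $\Ps_3(d)\leftrightarrow\Ps_3(-d)$, $\Ps_4(d)\leftrightarrow\Ps_4(-d)$). Applying the already-proved case $i=2$ in degree $-d$ then yields that $S_X(3,d,\Ps_1(d))$ is irreducible of dimension $5$.

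Finally I would establish the closure and projectivity assertions. Working out the partial order on $\{\Ps_1(d),\dots,\Ps_4(d)\}$, a finite check giving $\Ps_4\succ\Ps_3\succ\Ps_2$, $\Ps_3\succ\Ps_1$, with $\Ps_1,\Ps_2$ incomparable, identifies $S(\Ps_i^+(d))$ as the union of the strata dominating $\Ps_i(d)$. For $i=2,3,4$ one has $S(\Ps_i^+(d))=\theta(Q(\Ps_i^+(d)))$, so projectivity follows because $Q(\Ps_i^+(d))$ is projective (Proposition \ref{DimofQuot}) and the image of a projective variety is a closed, hence projective, subvariety; pushing the Quot-scheme relation $\overline{Q(\Ps_i(d))}=Q(\Ps_i^+(d))$ through the continuous map $\theta$ gives $S(\Ps_j(d))\subseteq\overline{S(\Ps_i(d))}$ for every $\Ps_j\succcurlyeq\Ps_i$ occurring, and combined with the closedness of $S(\Ps_i^+(d))$ from Lemma \ref{Shatz77}(2) yields $\overline{S(\Ps_i(d))}=S(\Ps_i^+(d))$. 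The case $i=1$ follows by dualizing the corresponding statement for $\Ps_2(-d)$. Irreducibility of each $S(\Ps_i^+(d))$ is then inherited from its dense open stratum, and the $S(\Ps_i(d))$ are quasi-projective as open subsets of the projective $S(\Ps_i^+(d))$. The only genuinely delicate point is the duality step for $\Ps_1(d)$; everything else is bookkeeping over the structures already in place.
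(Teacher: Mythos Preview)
Your argument is correct and agrees with the paper for $i=1,2,3$: the paper likewise uses the injectivity of $\theta$ on $\Quot^\sharp$ for $i=2,3$, the surjectivity of $\theta$ onto $S(\Ps_i^+(d))$ together with Shatz's semicontinuity for the closure statements, and the dualization isomorphism $[\E]\mapsto[\E^\vee]$ sending $\Ps_1(d)$ to $\Ps_2(-d)$ for $i=1$.

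The one genuine difference is the case $i=4$. The paper does not compute the fibres of $\theta$ over $S(\Ps_4(d))$; instead it cites \cite{Li14} to identify this stratum intrinsically: every $[\E]\in S_X(3,d,\Ps_4(d))$ is of the form ${F_X}_*(\Ls')$ for a unique $\Ls'\in\Pic^{(d-2)}(X)$, and the push-forward morphism $\Pic^{(d-2)}(X)\to\M^s_X(3,d)$, $[\Ls']\mapsto[{F_X}_*(\Ls')]$, is a closed immersion with image $S_X(3,d,\Ps_4(d))$. This gives at once that $S_X(3,d,\Ps_4(d))\cong\Jac_X$ is smooth, irreducible, projective of dimension $2$, and already closed (so $S(\Ps_4(d))=S(\Ps_4^+(d))$). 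Your route---factoring every $F^*_X(\E)\to\Ls$ through the canonical quotient $\Ls'$ and thereby identifying each fibre of $\theta$ with the curve $X$ (via the effective divisor of $\Ls'\hookrightarrow\Ls$), then applying the fibre-dimension theorem to the irreducible $3$-dimensional $Q(\Ps_4(d))$---is a valid alternative that stays entirely within the machinery of the present paper and avoids the external reference. The trade-off is that you obtain only irreducibility and $\dim=2$, whereas the paper's argument yields the stronger structural statement that the stratum is isomorphic to the Jacobian.
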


\begin{proof}
The morphism
\begin{eqnarray*}
\theta:\Quot_X(3,d,\Pic^{(d-1)}(X))&\rightarrow&\M^s_X(3,d)\\
{[\E\hookrightarrow{F_X}_*(\Ls)]}&\mapsto&[\E]
\end{eqnarray*}
maps $\Quot_X(3,d,\Pic^{(d-1)}(X),\Ps^+_i(d))$ onto $S_X(3,d,\Ps^+_i(d))$ for $2\leq i\leq 4$. Then by Proposition \ref{DimofQuot}, $S_X(3,d,\Ps^+_i(d))$ are irreducible projective varieties and $$\overline{S_X(3,d,\Ps_i(d))}=S_X(3,d,\Ps^+_i(d))$$
for $2\leq i\leq 4$, since $S_X(3,d,\Ps_i(d))$ is an open subvariety of $S_X(3,d,\Ps^+_i(d))$ by Lemma \ref{Shatz77}(2). Moreover, by Proposition \ref{Prop:InjMorphism}, the injection $\theta|_{\Quot^\sharp_X(3,d,\Pic^{(d-1)}(X))}$ maps $\Quot_X(3,d,\Pic^{(d-1)}(X),\Ps_i(d))$ onto $S_X(3,d,\Ps_i(d))$ for $i=2,3$. Then by Proposition \ref{DimofQuot}, we have
$$\dim S_X(3,d,\Ps_i(d))=\dim S_X(3,d,\Ps^+_i(d))=\dim\Quot_X(3,d,\Pic^{(d-1)}(X),\Ps^+_i(d))=
\begin{cases}
5, i=2\\
4, i=3
\end{cases}$$
The isomorphism $\iota:\M^s_X(3,d)\rightarrow\M^s_X(3,-d):[\E]\mapsto[\E^\vee]$ maps $S_X(3,d,\Ps_1(d))$ (resp. $S_X(3,d,\Ps^+_1(d))$) onto $S_X(3,-d,\Ps_2(-d))$ (resp. $S_X(3,-d,\Ps^+_2(-d))$). So we have
$\overline{S_X(3,d,\Ps_1(d))}=S_X(3,d,\Ps^+_1(d))$ is an irreducible projective variety and
$$\dim S_X(3,d,\Ps_1(d))=\dim S_X(3,d,\Ps^+_1(d))=5.$$

Now we study the properties of locus $S_X(3,d,\Ps_4(d))$. By \cite[Lemma 3.1]{Li14}, we know that any $[\E]\in S_X(3,d,\Ps_4(d))$ has the form ${F_X}_*(\Ls')$ for some line bundle $\Ls'$ of $\deg(\Ls')=d-2$. Moreover, by \cite[Theorem 2.5]{Li14}, the morphism
\begin{eqnarray*}
S^s_{\Frob}:\M^s_X(1,d-2)&\rightarrow&\M^s_X(3,d)\\
{[\Ls']}&\mapsto&[{F_X}_*(\Ls')]
\end{eqnarray*}
is a closed immersion and the image of $S^s_{\Frob}$ is just the $S_X(3,d,\Ps_4(d))$. Thus $S_X(3,d,\Ps_4(d))=S_X(3,d,\Ps^+_4(d))$ is isomorphic to Jacobian variety $\mathrm{Jac}_X$ of $X$ which is a smooth irreducible projective variety of dimension $2$.
\end{proof}

The subset of Frobenius destabilised stable vector bundles in $\M^s_X(3,d)$ is a reducible closed subset which contains two irreducible closed subvarieties of dimension $5$. Since $\dim\M^s_X(3,d)=10$, the codimension of the subset of Frobenius destabilised stable vector bundles in $\M^s_X(3,d)$ is $5$.

\section*{Acknowledgements}
I would like to thank Luc Illusie, Xiaotao Sun, Yuichiro Hoshi, Mingshuo Zhou, Junchao Shentu, Yingming Zhang for their interest and helpful conversations. Especially, I express my greatest appreciation to the late Professor Michel Raynaud, who tell me the Lemma \ref{Lem:Grothendieck} according to an unpublished note of Alexander Grothendieck.

\end{document}